\newcommand{\XYMATRIX}{\xymatrix@M=6pt}
\newcommand{\aremb}{\ar@{^{(}->}}
\newcommand{\arembfrom}{\ar@{<-^{)}}}
\numberwithin{equation}{section}
\def\labelenumi{(\roman{enumi})}
\theoremstyle{plain}
\newtheorem{THM}{Theorem}[section]
\newtheorem{LEM}[THM]{Lemma}
\newtheorem{COR}[THM]{Corollary}
\theoremstyle{definition}
\newtheorem{DEF}[THM]{Definition}
\newtheorem{EX}[THM]{Example}
\theoremstyle{remark}
\renewcommand{\le}{\leqslant}
\renewcommand{\ge}{\geqslant}
\newcommand{\0}{\varnothing}
\renewcommand{\sec}{\cap}
\renewcommand{\phi}{\varphi}
\renewcommand{\epsilon}{\varepsilon}
\newcommand{\UNION}{\bigcup}
\newcommand{\AAA}{\mathbf{A}}
\newcommand{\BB}{\mathbf{B}}
\newcommand{\CC}{\mathbf{C}}
\newcommand{\DD}{\mathbf{D}}
\newcommand{\EE}{\mathbf{E}}
\newcommand{\KK}{\mathbf{K}}
\newcommand{\QQ}{\mathbb{Q}}
\newcommand{\RR}{\mathbb{R}}
\newcommand{\ZZ}{\mathbb{Z}}
\newcommand{\union}{\cup}
\newcommand{\restr}[2]{\hbox{$#1$}\hbox{$\upharpoonright$}_{#2}}
\newcommand{\Boxed}[1]{\mbox{$#1$}}
\newcommand{\id}{\mathrm{id}}
\newcommand{\Ob}{\mathrm{Ob}}
\newcommand{\Age}{\mathrm{Age}}
\newcommand{\AGE}{\overline{\mathrm{Age}}}
\newcommand{\Arr}{\mathrm{Arr}}
\newcommand{\spec}{\mathrm{spec}}
\newcommand{\cod}{\mathrm{cod}}
\newcommand\nlongrightarrow{\longrightarrow\kern -1.45em/\kern 0.9em}
\newcommand{\fin}{\mathit{fin}}
\newcommand{\calA}{\mathcal{A}}
\newcommand{\calB}{\mathcal{B}}
\newcommand{\calC}{\mathcal{C}}
\newcommand{\calD}{\mathcal{D}}
\newcommand{\calE}{\mathcal{E}}
\newcommand{\calF}{\mathcal{F}}
\newcommand{\calG}{\mathcal{G}}
\newcommand{\calH}{\mathcal{H}}
\newcommand{\calK}{\mathcal{K}}
\newcommand{\calL}{\mathcal{L}}
\newcommand{\calM}{\mathcal{M}}
\newcommand{\calO}{\mathcal{O}}
\newcommand{\calQ}{\mathcal{Q}}
\newcommand{\calR}{\mathcal{R}}
\newcommand{\calS}{\mathcal{S}}
\newcommand{\calT}{\mathcal{T}}
\newcommand{\calU}{\mathcal{U}}
\newcommand{\calV}{\mathcal{V}}
\newcommand{\calX}{\mathcal{X}}
\newcommand{\calY}{\mathcal{Y}}
\newcommand{\ChEmb}{\mathbf{Ch}}
\newcommand{\PermEmb}{\mathbf{Perm}}
\newcommand{\GraEmb}{\mathbf{Gra}}
\newcommand{\TourEmb}{\mathbf{Tour}}
\newcommand{\PosEmb}{\mathbf{Pos}}
\newcommand{\MetIso}{\mathbf{Met}}
\newcommand{\UltIso}{\mathbf{Ult}}
\newcommand{\REL}{\mathbf{Rel}}
\newcommand{\OGraEmb}{\mathbf{OGra}}
\newcommand{\CGraEmb}{\mathbf{CGra}}
\newcommand{\KGraEmb}{\mathbf{KGra}}
\newcommand{\Fraisse}{Fra\"\i ss\'e}
\DeclareMathOperator{\Aut}{Aut}
\DeclareMathOperator{\dom}{dom}
\title{Finite big Ramsey degrees in universal structures}
\author{%
  Dragan Ma\v sulovi\'c\\
  University of Novi Sad, Faculty of Sciences\\
  Department of Mathematics and Informatics\\
  Trg Dositeja Obradovi\'ca 3, 21000 Novi Sad, Serbia\\
  e-mail: dragan.masulovic@dmi.uns.ac.rs}
\begin{document}
\maketitle

\begin{abstract}
  Big Ramsey degrees of finite structures are usually considered with respect to a \Fraisse\ limit.
  Building mainly on the work of Devlin, Sauer, Laflamme and Van Th\'e, in this paper 
  we consider structures which are \emph{not} \Fraisse\ limits, and
  still have the property that their finite substructures have finite big Ramsey degrees in them.
  For example, the class of all finite acyclic oriented graphs is not a \Fraisse\ age,
  and yet we show that there is a countably infinite acyclic oriented graph $\calD$ in
  which every finite acyclic oriented graph has finite big Ramsey degree.
  Our main tools come from category theory as it has recently become evident that the Ramsey property
  is not only a deep combinatorial property, but also a genuine categorical property.

  \bigskip

  \noindent \textbf{Key Words: big Ramsey degrees, universal structures}

  \noindent \textbf{AMS Subj.\ Classification (2010):} 05C55, 03C15, 18A99
\end{abstract}

\section{Introduction}

Generalizing the classical results of F.~P.~Ramsey from the late 1920's, the structural Ramsey theory originated at
the beginning of 1970’s in a series of papers (see \cite{N1995} for references).
We say that a class $\KK$ of finite structures has the \emph{Ramsey property} if the following holds:
for any number $k \ge 2$ of colors and all $\calA, \calB \in \KK$ there is a $\calC \in \KK$ such that
$$
  \calC \longrightarrow (\calB)^\calA_k.
$$
The above is a symbolic way of expressing that
no matter how we color the copies of $\calA$ in $\calC$ with $k$ colors, one can always find a \emph{monochromatic} copy
$\calB'$ of $\calB$ in $\calC$ (that is, all the copies of $\calA$ that fall within $\calB'$ are colored by the same color).

Many natural classes of structures such as finite graphs and finite posets
do not have the Ramsey property. Nevertheless, many of these classes enjoy the weaker property of \emph{having
finite (small) Ramsey degrees} first observed in~\cite{fouche97}.
An integer $t \ge 1$ is a \emph{(small) Ramsey degree of a structure} $\calA \in \KK$ if it is the smallest
positive integer satisfying the following: for any $k \ge 2$ and
any $\calB \in \KK$ there is a $\calC \in \KK$ such that
$$
  \calC \longrightarrow (\calB)^\calA_{k, t}.
$$
This is a symbolic way of expressing that 
no matter how we color the copies of $\calA$ in $\calC$ with $k$ colors, one can always find a \emph{$t$-oligochromatic} copy
$\calB'$ of $\calB$ in $\calC$ (that is, there are at most $t$ colors used to color the copies of $\calA$ that fall within $\calB'$).
If no such $t \ge 1$ exists for an $\calA \in \KK$, we say that $\calA$ \emph{does not have finite (small) Ramsey degree.}
For example, finite graphs, finite posets and many other classes of finite structures
are known to have finite (small) Ramsey degrees \cite{fouche97,fouche98,fouche99}.

Going back to the origins of the Ramsey theory, let us recall the infinite version of the Ramsey's Theorem:

\begin{THM}[Ramsey's Theorem \cite{Ramsey}]\label{fbrd.thm.inf-Ramsey}
    For any $k \ge 2$ and $n \ge 1$ and an arbitrary coloring $\chi : \binom \omega n \to k$ of $n$-element subsets of $\omega$
    with $k$ colors there exists an infinite set $A \subseteq \omega$ such that $\chi(X) = \chi(Y)$ for all $X, Y \in \binom A n$.
    In other words,
    $$
      \omega \longrightarrow (\omega)^n_k.
    $$
\end{THM}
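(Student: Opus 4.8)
The plan is to argue by induction on $n$, the size of the sets being coloured. The base case $n = 1$ is precisely the infinite pigeonhole principle: a colouring $\chi \colon \binom{\omega}{1} \to k$ partitions $\omega$ into $k$ classes, and since $\omega$ is infinite while $k$ is finite, at least one class $A$ must be infinite. This $A$ then trivially satisfies $\chi(X) = \chi(Y)$ for all singletons $X, Y \in \binom{A}{1}$, which is the required conclusion.

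For the inductive step I would assume the statement for $n$ and take an arbitrary $\chi \colon \binom{\omega}{n+1} \to k$. The key construction is a recursively defined strictly increasing sequence $a_0 < a_1 < \cdots$ in $\omega$ together with a decreasing chain of infinite sets $\omega = B_0 \supseteq B_1 \supseteq \cdots$ such that, at every stage, $a_i = \min B_i$, $B_{i+1} \subseteq B_i \setminus \{a_i\}$, and the ``link colour'' $\chi(\{a_i\} \cup Y)$ is independent of the choice of $Y \in \binom{B_{i+1}}{n}$. To perform one step, having fixed $B_i$ and $a_i = \min B_i$, I consider the induced colouring $\chi_i \colon \binom{B_i \setminus \{a_i\}}{n} \to k$ given by $\chi_i(Y) = \chi(\{a_i\} \cup Y)$; transporting the infinite set $B_i \setminus \{a_i\}$ to $\omega$ via its unique order isomorphism and applying the inductive hypothesis yields an infinite $B_{i+1} \subseteq B_i \setminus \{a_i\}$ on which $\chi_i$ is constant, with value recorded as $c_i \in k$.

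Finally, since only $k$ colours are available for the infinitely many values $c_0, c_1, \ldots$, one colour $c$ occurs infinitely often; I set $A = \{a_i : c_i = c\}$, which is infinite. To verify monochromaticity, take any $X = \{a_{i_0}, \ldots, a_{i_n}\} \in \binom{A}{n+1}$ with $i_0 < \cdots < i_n$. Its least element is $a_{i_0}$, and each remaining element satisfies $a_{i_j} \in B_{i_j} \subseteq B_{i_0 + 1}$ for $j \ge 1$, so $\{a_{i_1}, \ldots, a_{i_n}\} \in \binom{B_{i_0+1}}{n}$. Hence $\chi(X) = \chi_{i_0}(\{a_{i_1}, \ldots, a_{i_n}\}) = c_{i_0} = c$, independently of $X$. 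The main obstacle is the bookkeeping in the recursive construction: everything hinges on arranging the nested sets $B_i$ so that the colour of an $(n+1)$-set is determined solely by its minimum together with the stage at which that minimum is extracted, which is exactly what the defining property of $\chi_i$ secures. Once this is set up correctly, both the stagewise appeal to the inductive hypothesis and the concluding pigeonhole step are routine.
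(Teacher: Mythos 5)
Your proof is correct: it is the classical argument for the infinite Ramsey theorem --- induction on $n$ with the pigeonhole principle as the base case, the recursive construction of nested infinite sets $B_0 \supseteq B_1 \supseteq \cdots$ with link colours $c_i$ determined by the minima $a_i$, and a final pigeonhole step on the colours $c_i$. Note that the paper itself gives no proof of this statement; it is quoted as a classical result with a citation to Ramsey's original 1930 paper, so there is nothing to compare against beyond confirming that your argument is the standard (and correct) one.
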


Another way of interpreting the Ramsey's Theorem is the following: given a finite chain $n$,
no matter how we color the copies of $n$ in the chain $\omega$ with $k$ colors, one can always find a monochromatic
subchain of $\omega$ isomorphic to $\omega$. Interestingly, the same is not true for $\QQ$. One can
easily produce a Sierpi\'nski-style coloring of two-element subchains of $\QQ$ with two colors
and with no monochromatic subchain isomorphic to~$\QQ$. So, $\QQ \nlongrightarrow (\QQ)^2_2$.
However, for every coloring $\chi : \binom \QQ 2 \to k$ one can always find a 2-oligochromatic copy of $\QQ$
\cite{galvin1,galvin2}. In other words, $\QQ \longrightarrow (\QQ)^2_{k,2}$. This result was then generalized
in \cite{devlin} where for each $m$ a positive integer $T_m$ was computed so that
$\QQ \longrightarrow (\QQ)^m_{k,T_m}$ for every $k \ge 2$. The integer $T_m$ is referred to as the
\emph{big Ramsey degree of~$m$ in~$\QQ$}.

In general, an integer $T \ge 1$ is a \emph{big Ramsey degree of a finite structure $\calA$ in a countably infinite
structure $\calU$} if it is the smallest positive integer such that
$$
  \calU \longrightarrow (\calU)^\calA_{k, T} \quad\text{for all $k \ge 2$}.
$$
If no such $T$ exists, we say that \emph{$\calA$ does not have big Ramsey degree in $\calU$}.
We denote the big Ramsey degree of $\calA$ in $\calU$ by $T(\calA, \calU)$, and write
$T(\calA, \calU) = \infty$ if $\calA$ does not have the big Ramsey degree in~$\calU$.

The chain of the rationals, $\QQ$, is not only a countable chain but also a \Fraisse\ limit of the class of all
the finite chains. Not surprisingly, $\QQ$ is not the only \Fraisse\ limit whose every finite substructure
has finite big Ramsey degree in it. Sauer proved in \cite{Sauer-2006} that several classes of finite structures
have finite big Ramsey degrees in the corresponding \Fraisse\ limits. Most notably, every finite graph has finite
big Ramsey degree in the Rado graph --- the \Fraisse\ limit of the class of all the finite graphs.
Van Th\'e proved in~\cite{van-the-bigrd-umet} that for every nonempty finite set $S$ of non-negative reals,
every finite $S$-ultrametric space has finite big Ramsey degree in the \Fraisse\ limit of the class of all
the finite $S$-ultrametric spaces. Laflamme, Van Th\'e and Sauer proved in~\cite{laf-vanthe-sauer-2010}
that every finite local order has finite big Ramsey degree in the dense local order $\calS(2)$
--- the \Fraisse\ limit of the class of all
the finite local orders. Finally, a remarkable result of Dobrinen \cite{dobrinen} shows that 
every finite triangle-free graph has finite big Ramsey degree in the Henson graph $\calH_3$
--- the \Fraisse\ limit of the class of all the finite triangle-free graphs.

In this paper we are interested in structures which are \emph{not} \Fraisse\ limits, and
still have the property that their finite substructures have finite big Ramsey degrees in them.
The infinite version of the  Ramsey's Theorem (Theorem~\ref{fbrd.thm.inf-Ramsey})
can be understood as the first result in this direction:
it claims that every finite chain has finite big Ramsey degree in $\omega$ (and that the degree is~1;
note that $\omega$ is universal for the class of all finite chains but it is not a \Fraisse\ limit).

Our main tools come from category theory as it has recently become evident that the Ramsey property
is not only a deep combinatorial property, but also a genuine categorical property
(see for example \cite{masulovic-ramsey,masul-preadj,masul-drp-perm}). Therefore,
we recall in Section~\ref{fbrd.sec.prelim} basic notions of \Fraisse\ theory, structural Ramsey theory and
category theory, and conclude the section with the reinterpretation of standard Ramsey-theoretic notions in the language
of category theory.

In Section~\ref{bigrd.sec.rdc} we consider big Ramsey degrees in the setting of
category theory and prove two technical results which form the backbone of the statements that follow.

In Section~\ref{fbrd.sec.misc} we prove that many universal structures, not only \Fraisse\ limits,
support finite big Ramsey degrees of finite structures that they embed.
For example, the class of all finite acyclic oriented graphs is not a \Fraisse\ age, and yet we show
that there is a countably infinite acyclic oriented graph $\calD$ in which every finite acyclic oriented graph
has finite big Ramsey degree.

Finally, in Section~\ref{bigrd.sec.metspc} we consider a special class of metric spaces and show that there exists
a countably infinite metric space in which each of the finite metric spaces from the class
has finite big Ramsey degree.

Although none of the countably infinite structures we construct in this paper is a \Fraisse\ limit,
it is interesting to note that each of them is related to some \Fraisse\ limit. In most cases, we take
a \Fraisse\ limit and transfrom it into a universal structure by adding
a linear ordering of its vertices of order type~$\omega$.
The importance of linear orders of order type~$\omega$ in the context of finite big Ramsey degrees
was first observed in~\cite{Zucker} where the author develops the topological counterpart of this
combinatorial property in the fashion of~\cite{KPT}.

\section{Preliminaries}
\label{fbrd.sec.prelim}

\paragraph{Structures.}
A structure is a set together with some additional information.
Structures will be denoted by script letters $\calA$, $\calB_1$, $\calC^*$, \dots, and
the underlying set of a structure $\calA$, $\calB_1$, $\calC^*$, \dots\ will always be denoted by its roman
letter $A$, $B_1$, $C^*$, \dots\ respectively.
A structure $\calA$ is \emph{finite (countably infinite)} if $A$ is a finite (countably infinite) set.
For a class $\KK$ of structures, by $\KK^\fin$ we denote the class of all the finite structures in~$\KK$.

All the structures we consider in this paper will fit into the framework of relational structures.
A \emph{relational language} is a first-order language $\Theta$ consisting of finitary relational symbols.
A \emph{$\Theta$-structure} $\calA = (A, \Theta^\calA)$ is a set $A$ together with a set $\Theta^\calA$ of finitary relations
on $A$ which are the interpretations of the corresponding symbols in $\Theta$.
A \emph{relational structure} is a $\Theta$-structure for some relational language~$\Theta$.
By $\REL_\Theta$ we denote the class of all the finite and countably infinite $\Theta$-structures.

An \emph{embedding} $f: \calA \hookrightarrow \calB$ between two $\Theta$-structures
is every injective map $f: A \rightarrow B$ such that
  for every $\theta \in \Theta_R$ we have that
  $(a_1, \dots, a_r) \in \theta^\calA \Leftrightarrow (f(a_1), \dots, f(a_r)) \in \theta^\calB$,
  where $r$ is the arity of~$\theta$.
We write $\calA \hookrightarrow \calB$ to denote that $\calA$ embeds into $\calB$, or
$f : \calA \hookrightarrow \calB$ to indicate that $f$ is an embedding.

A structure $\calB$ is \emph{universal for a class $\KK$} if $\calA \hookrightarrow \calB$ for every
$\calA \in \KK$.

Surjective embeddings are \emph{isomorphisms}. Structures $\calA$ and $\calB$ are \emph{isomorphic},
and we write $\calA \cong \calB$, if there is an isomorphism $\calA \to \calB$.
An \emph{automorphism} is an isomorphism $\calA \to \calA$. By $\Aut(\calA)$ we denote the
set of all the automorphisms of a structure~$\calA$.

A $\Theta$-structure $\calA$ is a \emph{substructure} of a $\Theta$-structure $\calB$,
and we write $\calA \le \calB$, if $A \subseteq B$ and the identity map $a \mapsto a$ is an embedding of $\calA$ into $\calB$.
Let $\calA$ be a structure and $B \subseteq A$. Then $\restr \calA B$ denotes the
\emph{restriction of $\calA$ to~$B$}: $\restr \calA B = (B, \{\restr{\theta^\calA}{B} : \theta \in \Theta\})$.
Note that $\restr \calA B$ exists for every subset $B$ of a relational structure~$\calA$.

\paragraph{Chains, posets and permutations.}
A \emph{chain} is a set $(A, \Boxed\sqsubset)$ endowed with a linear order~$\sqsubset$.
For a linear order $\sqsubset$, by $\sqsubseteq$ we denote its reflexive closure.
Let $\omega = \{0, 1, 2, \dots\}$ be the first infinite ordinal as the set, but also as the chain $(\{0, 1, 2, \dots\}, \Boxed<)$.
A chain \emph{has order type $\omega$} if it is isomorphic to~$\omega$.
Let $\ChEmb$ denote the class of all the finite and countably infinite chains.

A \emph{poset} is a relational structure $(A, \Boxed\preccurlyeq)$
where $\Boxed\preccurlyeq$ is a partial order on~$A$. For a poset $(A, \Boxed\preccurlyeq)$ we write
$x \prec y$ to denote that $x \preccurlyeq y$ and $x \ne y$.
Let $\PosEmb$ denote the class of all the finite and countably infinite posets.

Following~\cite{cameron-perm}, structures of the form $(A, \Boxed<, \Boxed\sqsubset)$
where $<$ and $\sqsubset$ are linear orders on~$A$ are called \emph{permutations}. The idea is that
in order to specify a permutation it suffices to specify two linear orders on $A$: the ``standard'' order
$a_1 < a_2 < \ldots$ and the permuted order $a_{i_1} \sqsubset a_{i_2} \sqsubset \ldots$ of elements of~$A$.
Let $\PermEmb$ denote the class of all the finite and countably infinite permutations.

\paragraph{Graphs and graph-like structures.}
A \emph{graph} is a structure $(V, E)$ where $E \subseteq V^2$ is an irreflexive and symmetric binary relation.
A graph $(V, E)$ is \emph{triangle-free} if it does not embed the complete graph on three vertices.
An \emph{oriented graph} is a relational structure $(V, E)$ where $E \subseteq V^2$ is an irreflexive binary relation
such that $(x, y) \in E \Rightarrow (y, x) \notin E$.
A \emph{tournament} is an oriented graph $(V, E)$ such that for all $x \ne y$ either
$(x, y) \in E$ or $(y, x) \in E$.
For an integer $k \ge 1$, a \emph{$k$-edge colored graph} is a structure
$(V, E_1, \dots, E_k)$ where $(V, E_i)$ is a graph for all $i$, and $E_i \sec E_j = \0$ for all $i \ne j$.
A \emph{complete $k$-edge colored graph} is a $k$-edge colored graph
$(V, E_1, \dots, E_k)$ where each pair $(x, y) \in V^2$ such that $x \ne y$ is contained
in some~$E_i$.

By~$\GraEmb$, $\GraEmb_3$, $\OGraEmb$, $\TourEmb$, $\CGraEmb_k$ and $\KGraEmb_k$
we denote the class of all the finite and countably infinite graphs, triangle-free graphs,
oriented graphs, tournaments, $k$-edge colored graphs and
complete $k$-edge colored graphs, respectively.

\paragraph{Metric spaces.}
A \emph{metric space} is an ordered pair $(M, d)$ where $d : M^2 \to \RR$ is a
\emph{metric}. Let $\MetIso$ denote the class of all the finite and countably infinite metric spaces.

For a metric space $\calM = (M, d)$ let $\spec(\calM) = \{d(x, y) : x, y \in M\}$
denote the \emph{spectre} of $\calM$, that is, the set of all the distances that are attained by points in~$\calM$.
A metric space $\calM = (M, d)$ is \emph{rational} if $\spec(\calM) \subseteq \QQ$, and it is \emph{integral}
if $\spec(\calM) \subseteq \ZZ$.
For a nonempty $S \subseteq \RR$ of nonnegative reals let $\MetIso_S$ denote the class of all the metric spaces
$\calM \in \MetIso$ satisfying $\spec(\calM) \subseteq S$. Metric spaces in $\MetIso_S$ are referred to as the
\emph{$S$-metric spaces}.

An \emph{ultrametric space} is a metric space $\calM = (M, d)$ satisfying
$d(x, z) \le \max\{d(x, y), d(y, z)\}$ for all $x, y, z \in M$.
Let $\UltIso$ denote the class of all the finite and countably infinite ultrametric spaces.
For a nonempty $S \subseteq \RR$ of nonnegative reals let $\UltIso_S$ denote the class of all the ultrametric spaces
$\calM \in \UltIso$ satisfying $\spec(\calM) \subseteq S$. Ultrametric spaces in $\UltIso_S$ are referred to as the
\emph{$S$-ultrametric spaces}.

An injective map $f : M_1 \to M_2$ is an \emph{isometric embedding} of $\calM_1 = (M_1, d_1)$ into
$\calM_2 = (M_2, d_2)$ if $d_1(x, y) = d_2(f(x), f(y))$ for all $x, y \in M_1$. We then write
$f : \calM_1 \hookrightarrow \calM_2$. Isomorphisms between metric spaces are usually referred to as
\emph{isometries}.

Metric spaces can be represented by binary relational structures in a standard way
(for each positive distance we introduce a binary symbol in the language). Then substructures
correspond directly to subspaces and embeddings of relational structures correspond to isometric embeddings.
Nevertheless, we shall refrain from doing so and we shall use the usual metric-space terminology and
notation throughout the paper.

\subsection{\Fraisse\ theory}
\label{bigrd.subsec.Fraisse}

\Fraisse\ theory is a theory of countably infinite ultrahomogeneous relational structures
developed in terms of combinatorial properties of finite approximations of those structures
\cite{Fraisse1,Fraisse2}.
For a modern exposition of the original, ``unrestricted'' \Fraisse\ theory
and further model theoretic background we refer the reader to~\cite{hodges}.
In this paper, however, we employ a simple generalization where the classical results of
\Fraisse\ theory are spelled out modulo a class of structures within which we reinterpret the
standard \Fraisse-theoretic toolbox.

Let $\AAA$ be a class of structures closed for taking isomorphic copies,
closed with respect to unions of countable chains of finite structures, and with the property that every
structure in $\AAA$ is a union of a countable chain of finite structures from~$\AAA$.
We think of $\AAA$ as the \emph{ambient class}.

The \emph{age (with respect to $\AAA$)} of a countably infinite structure $\calF \in \AAA$ is the class of all
the finite structures from $\AAA$ that embed into~$\calF$. The age of $\calF$ with respect to $\AAA$
will be denoted by~$\Age_\AAA(\calF)$. Let $\AGE_\AAA(\calF)$ denote the class of all the structures
(both finite and infinite) from $\AAA$ that embed into~$\calF$.
A class $\KK \subseteq \AAA$ of finite structures is an \emph{age (with respect to $\AAA$)}
if there is countably infinite structure $\calF \in \AAA$ such that
$\KK = \Age_\AAA(\calF)$. It is easy to see that $\KK$ is an age if and only if:
\begin{itemize}
\item
  $\KK$ is closed for taking isomorphic copies;
\item
  there are at most countably many pairwise nonisomorphic structures in $\KK$;
\item
  $\KK$ has the \emph{hereditary property (HP) with respect to $\AAA$}:
  if $\calB \in \KK$ and $\calA \in \AAA$ such that $\calA \hookrightarrow \calB$ then $\calA \in \KK$;
  and
\item
  $\KK$ has the \emph{joint embedding property (JEP)}: for all $\calA, \calB \in \KK$ there is a $\calC \in \KK$ such that
  $\calA \hookrightarrow \calC$ and $\calB \hookrightarrow \calC$.
\end{itemize}

An age $\KK$ is a \emph{\Fraisse\ age} if $\KK$ satisfies the
\emph{amalgamation property (AP)}: for all $\calA, \calB, \calC \in \KK$ and embeddings $f : \calA \hookrightarrow \calB$ and
$g : \calA \hookrightarrow \calC$ there exist $\calD \in \KK$ and embeddings $f' : \calB \hookrightarrow \calD$ and
$g' : \calC \hookrightarrow \calD$ such that $f' \circ f = g' \circ g$.

A structure $\calF$ is \emph{ultrahomogeneous (with respect to $\AAA$)}
if for every $\calA \in \Age_\AAA(\calF)$ and every pair of
embeddings $f, g : \calA \hookrightarrow \calF$ there is an automorphism $h \in \Aut(\calF)$ such that
$f = h \circ g$. With respect to $\AAA$, the age of every countably infinite ultrahomogeneous
structure is a \Fraisse\ age.
Conversely, with respect to $\AAA$, for every \Fraisse\ age $\KK$ there is a unique (up to isomorphism)
countably infinite ultrahomogeneous structure $\calF$ such that $\KK = \Age_\AAA(\calF)$.
We say that $\calF$ is the \emph{\Fraisse\ limit of $\KK$ with respect to~$\AAA$.}

\begin{EX}\label{bigrd.ex.fraisse-classes}
  In the usual, ``unrestricted'' setting we take $\REL_\Theta$ for some appropriate
  $\Theta$ to be the ambient class and then we have the following:
  \begin{enumerate}\renewcommand{\labelenumi}{(\theenumi)}
  \item
    $\ChEmb^\fin$ is a \Fraisse\ age and its \Fraisse\ limit is the chain of the rationals $\calQ = (\QQ, \Boxed<)$
    with the usual order~\cite{Fraisse1,Fraisse2};
  \item
    $\PosEmb^\fin$ is a \Fraisse\ age and its \Fraisse\ limit is the \emph{random poset}~\cite{Schmerl};
  \item
    $\PermEmb^\fin$ is a \Fraisse\ age and its \Fraisse\ limit is the \emph{random permutation}
    described in~\cite{cameron-perm};
  \item
    $\REL_\Theta^\fin$ is a \Fraisse\ age and its \Fraisse\ limit is the
    \emph{random $\Theta$-structure}~$\calS_\Theta$ \cite{Fraisse1,Fraisse2};
  \item
    $\GraEmb^\fin$ is a \Fraisse\ age and its \Fraisse\ limit is the \emph{random graph} $\calR$~\cite{Erdos-Renyi};
  \item
    $\GraEmb_3^\fin$ is also a \Fraisse\ age and its \Fraisse\ limit is the \emph{Henson graph} $\calH_3$~\cite{henson};
  \item
    analogously, $\OGraEmb^\fin$, $\TourEmb^\fin$, $\CGraEmb_k^\fin$ and $\KGraEmb_k^\fin$
    are all \Fraisse\ ages and their \Fraisse\ limits will be referred to as the
    \emph{random oriented graph} $\calO$, \emph{random tournament} $\calT$,
    \emph{random $k$-edge colored graph} $\calE_k$ and \emph{random complete $k$-edge colored graph} $\calK_k$,
    respectively;
  \item
    $\MetIso_\QQ^\fin$ is a \Fraisse\ age and its \Fraisse\ limit is the
    \emph{rational Urysohn space} $\calU_\QQ$~\cite{Urysohn};
  \item
    if $\MetIso_S^\fin$ is a \Fraisse\ age its \Fraisse\ limit will be referred to as the
    \emph{Urysohn $S$-metric space} and denoted by~$\calU_S$
    (a detailed analysis of those sets $S$ of nonnegative reals for which
    $\MetIso_S^\fin$ is a \Fraisse\ age can be found in~\cite{dlps-2007} and~\cite{Sauer-2013} and
    we shall get back to this in Section~\ref{bigrd.sec.metspc});
  \item
    for every at most countable $S \subseteq \RR$ of nonnegative reals
    $\UltIso_S^\fin$ is a \Fraisse\ age and its \Fraisse\ limit is the ultrametric analogon of the
    Urysohn space that we denote by $\calY_S$.
  \end{enumerate}
\end{EX}

\begin{EX}
  A graph is \emph{connected-homogeneous} if any isomorphism between
  finite connected induced subgraphs extends to an automorphism of the graph.
  In other words, a graph is connected-homogeneous if and only if it is ultrahomogeneous
  with respect to the class $\CC$ of all the finite and countably infinite connected graphs.
  All countable connected-homogeneous graphs were classified in~\cite{Gray-Macpherson}.
\end{EX}

\subsection{Categories and functors}

In this section we provide a brief overview of elementary category-theoretic notions.
For a detailed account of category theory we refer the reader to~\cite{AHS}.

In order to specify a \emph{category} $\CC$ one has to specify
a class of objects $\Ob(\CC)$, a set of morphisms $\hom_\CC(A, B)$ for all $A, B \in \Ob(\CC)$,
the identity morphism $\id_A$ for all $A \in \Ob(\CC)$, and
the composition of morphisms~$\cdot$~so that
$\id_B \cdot f = f = f \cdot \id_A$ for all $f \in \hom_\CC(A, B)$, and
$(f \cdot g) \cdot h = f \cdot (g \cdot h)$ whenever the compositions are defined.
A morphism $f \in \hom_\CC(B, C)$ is \emph{monic} or \emph{left cancellable} if
$f \cdot g = f \cdot h$ implies $g = h$ for all $g, h \in \hom_\CC(A, B)$ where $A \in \Ob(\CC)$ is arbitrary.

\begin{EX}
  Any class $\KK$ of structures can be thought of as a category whose objects are the structures from $\KK$ and whose
  morphisms are the embeddings. So, we have the category $\ChEmb$ of chains and embeddings,
  the category $\GraEmb$ of graphs and embeddings, the category
  $\MetIso$ of metric spaces with isometric embeddings, and so on.
\end{EX}

A category $\DD$ is a \emph{subcategory} of a category $\CC$ if $\Ob(\DD) \subseteq \Ob(\CC)$ and
$\hom_\DD(A, B) \subseteq \hom_\CC(A, B)$ for all $A, B \in \Ob(\DD)$.
A category $\DD$ is a \emph{full subcategory} of a category $\CC$ if $\Ob(\DD) \subseteq \Ob(\CC)$ and
$\hom_\DD(A, B) = \hom_\CC(A, B)$ for all $A, B \in \Ob(\DD)$.

\begin{EX}
  For every class $\KK$ of structures understood as a category, $\KK^\fin$ is a
  full subcategory of $\KK$.
  For any nonempty $S \subseteq \RR$ of nonnegative reals, $\MetIso_S$ is a full subcategory of $\MetIso$,
  and $\UltIso_S$ is a full subcategory of $\UltIso$.
\end{EX}

Let $\CC$ be a category and let $C \in \Ob(\CC)$ be an object in~$\CC$. By $\AGE_\CC(C)$ we denote the full
subcategory of $\CC$ spanned by the class of all the objects $A \in \Ob(\CC)$ such that $\hom_\CC(A, C) \ne \0$.
We shall omit the subscript and write simply $\AGE(C)$ whenever the ambient category $\CC$ can easily be
deduced from the context.

A \emph{functor} $F : \CC \to \DD$ from a category $\CC$ to a category $\DD$ maps $\Ob(\CC)$ to
$\Ob(\DD)$ and maps morphisms of $\CC$ to morphisms of $\DD$ so that
$F(f) \in \hom_\DD(F(A), F(B))$ whenever $f \in \hom_\CC(A, B)$, $F(f \cdot g) = F(f) \cdot F(g)$ whenever
$f \cdot g$ is defined, and $F(\id_A) = \id_{F(A)}$.

A functor $U : \CC \to \DD$ is \emph{forgetful} if it is injective on hom-sets in the following sense: for all $A, B \in \Ob(\CC)$
the mapping $\hom_\CC(A, B) \to \hom_\DD(U(A), U(B)) : h \mapsto U(h)$ is injective.
In this setting we may actually assume that $\hom_{\CC}(A, B) \subseteq \hom_\DD(U(A), U(B))$ for all $A, B \in \Ob(\CC)$.
The intuition behind this point of view is that $\CC$ is a category of structures, $\DD$ is the category of sets
and $U$ takes a structure $\calA$ to its underlying set $A$ (thus ``forgetting'' the structure). Then for every
morphism $f : \calA \to \calB$ in $\CC$ the same map is a morphism $f : A \to B$ in $\DD$.

Two functors $F : \CC \to \DD$ and $G : \DD \to \EE$ compose in the obvious way to produce the
functor $GF : \CC \to \EE$.
Categories $\CC$ and $\DD$ are \emph{isomorphic} if there exist functors $F : \CC \to \DD$ and $G : \DD \to \CC$ which are
inverses of one another, both on objects and on morphisms. We than say that $F : \CC \to \DD$ is an isomorphism between
$\CC$ and $\DD$.

\begin{EX}
  Let $S = \{ 0 = s_0 < s_1 < \dots < s_n \}$ and $S' = \{ 0 = s'_0 < s'_1 < \dots < s'_n \}$ be two
  finite sets of reals. Then the categories $\UltIso_S$ and $\UltIso_{S'}$ are isomorphic. To see this, let
  $
    \phi = \begin{pmatrix}
      s_0 & s_1 & \dots & s_n\\
      s'_0 & s'_1 & \dots & s'_n
    \end{pmatrix}
  $
  be a bijection from~$S$ to~$S'$. Then the isomorphism $F : \UltIso_S \to \UltIso_{S'}$ is given by
  $F(M, d) = (M, \phi \circ d)$ on objects and by $F(f) = f$ on morphisms. Therefore,
  when dealing with categories of ultrametric spaces over fixed finite spectres it suffices to consider
  categories of the form $\UltIso_{\{0, 1, \dots, n\}}$.
\end{EX}

\section{Big Ramsey degrees in a category}
\label{bigrd.sec.rdc}

For a set $S$ we say that
$
  S = X_0 \union X_1 \union \dots \union X_{k-1}
$
is a \emph{$k$-coloring} of $S$ if $X_i \sec X_j = \0$ whenever $i \ne j$.
Equivalently, a $k$-coloring of $\calS$ is any mapping $\chi : \calS \to k$, where, as usual,
we take $k$ to be the set of all the smaller ordinals. The relationship between the two notions is obvious and we shall use both.

\begin{DEF}
  Let $\CC$ be a category.
  \begin{itemize}
  \item
    For $A, B, C \in \Ob(\CC)$ we write
    $C \longrightarrow (B)^{A}_{k, t}$
    to denote that for every $k$-coloring
    $\chi : \hom_\CC(A, C) \to k$
    there is a morphism $w \in \hom_\CC(B, C)$ such that
    $|\chi(w \cdot \hom_\CC(A, B))| \le t$.
  \item
    For $C \in \Ob(\CC)$ and $A \in \AGE_\CC(C)$
    we say that $A$ has \emph{finite big Ramsey degree in $C$}
    if there exists a positive integer $t$ such that for each $k \ge 2$ we have that
    $C \longrightarrow (C)^{A}_{k, t}$.
    The least such $t$ is then denoted by $T_\CC(A, C)$. If such a $t$ does not exist
    we say that $A$ \emph{does not have finite big Ramsey degree in $C$} and write
    $T_\CC(A, C) = \infty$.

    We shall omit the subscript and write simply $T(A, C)$ whenever the ambient category $\CC$
    can easily be deduced from the context.
  \end{itemize}
\end{DEF}

\begin{EX}\label{bigrd.ex.brd-ch}
  In the category $\ChEmb$ of chains and embeddings
  every finite chain has finite big Ramsey degree in $\calQ = (\QQ, \Boxed<)$ -- the rationals with the usual order~\cite{galvin1,galvin2}.
  The exact values of $T(\calC, \calQ)$ were computed in~\cite{devlin}.
\end{EX}

\begin{EX}\label{bigrd.ex.brd-gra}
  Sauer proved in \cite{Sauer-2006} that several classes of finite structures have finite big Ramsey degrees
  in the corresponding \Fraisse\ limits:
  \begin{itemize}
  \item
    in the category $\GraEmb$ every finite graph has finite big Ramsey degree in $\calR$;
  \item
    in the category $\OGraEmb$ every finite oriented graph has finite big Ramsey degree in $\calO$;
  \item
    in the category $\TourEmb$ every finite tournament has finite big Ramsey degree in $\calT$;
  \item
    in the category $\CGraEmb_k$ where $k \ge 2$, every finite $k$-edge colored graph has finite big Ramsey degree in $\calE_k$;
  \item
    in the category $\KGraEmb_k$ where $k \ge 2$, every finite complete $k$-edge colored graph has finite big Ramsey degree in $\calK_k$;
  \item
    in the category $\REL_\Theta$ where $\Theta$ is a finite set consisting of binary relational symbols,
    every finite $\Theta$-structure has finite big Ramsey degree in $\calS_\Theta$.
  \end{itemize}  
\end{EX}

\begin{EX}\label{bigrd.ex.dobrinen}
  A remarkable result of N.~Dobrinen \cite{dobrinen} shows that 
  in the category $\GraEmb_3$ every finite triangle-free graph has finite big Ramsey degree in $\calH_3$.
\end{EX}

\begin{EX}\label{bigrd.ex.brd-ult}
  Let $0 \in S \subseteq \RR$ be a finite set of nonnegative reals. Then
  in the category $\UltIso_S$ every finite ultrametric space
  has finite big Ramsey degree in $\calY_S$.
  On the other hand, if $0 \in S \subseteq \RR$ is an infinite set of nonnegative reals,
  no finite ultrametric space from $\UltIso_S$ has finite big Ramsey degree in $\calY_S$~\cite{van-the-bigrd-umet}.
\end{EX}

\begin{LEM}\label{bigrd.lem.iso-cat}
  Let $\CC$ and $\DD$ be isomorphic categories and let $F : \CC \to \DD$ be an isomorphism.
  Take any $C \in \Ob(\CC)$ and $A \in \AGE(C)$. Then $T_\CC(A, C) = T_\DD(F(A), F(C))$.
\end{LEM}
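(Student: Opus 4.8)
The plan is to reduce the equality of big Ramsey degrees to the claim that the partition arrows transfer verbatim across the isomorphism $F$. Concretely, I would show that for every $B \in \Ob(\CC)$ and all integers $k, t$,
$$
  C \longrightarrow (B)^{A}_{k,t} \quad\text{if and only if}\quad F(C) \longrightarrow (F(B))^{F(A)}_{k,t}.
$$
Granting this, I specialize to $B = C$ (so $F(B) = F(C)$): for each $k$ the arrow $C \longrightarrow (C)^A_{k,t}$ holds in $\CC$ exactly when $F(C) \longrightarrow (F(C))^{F(A)}_{k,t}$ holds in $\DD$, so the two sets of integers $t$ witnessing a finite degree (quantified over all $k \ge 2$) coincide, and therefore they are simultaneously empty or share a least element, giving $T_\CC(A,C) = T_\DD(F(A), F(C))$. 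I would first record that since $F$ is a category isomorphism it restricts, for every pair of objects $X, Y$, to a bijection $\hom_\CC(X,Y) \to \hom_\DD(F(X), F(Y))$; in particular $\hom_\CC(A,C) \ne \0$ forces $\hom_\DD(F(A), F(C)) \ne \0$, so that $F(A) \in \AGE(F(C))$ and the right-hand degree is well defined.

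For the forward direction of the claim, I would start from an arbitrary $k$-coloring $\chi' : \hom_\DD(F(A), F(C)) \to k$, pull it back along $F$ to the coloring $\chi(f) = \chi'(F(f))$ of $\hom_\CC(A, C)$, and apply $C \longrightarrow (B)^A_{k,t}$ to obtain a witness $w \in \hom_\CC(B, C)$ with $|\chi(w \cdot \hom_\CC(A,B))| \le t$. The candidate witness on the $\DD$-side is $F(w) \in \hom_\DD(F(B), F(C))$. The crux is the set identity
$$
  F(w) \cdot \hom_\DD(F(A), F(B)) = F\bigl(w \cdot \hom_\CC(A, B)\bigr),
$$
which I would verify using two facts: that $F$ maps $\hom_\CC(A,B)$ \emph{onto} $\hom_\DD(F(A), F(B))$ (bijectivity on hom-sets), and functoriality $F(w \cdot g) = F(w) \cdot F(g)$. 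Combined with $\chi = \chi' \circ F$, this yields $\chi'(F(w) \cdot \hom_\DD(F(A), F(B))) = \chi(w \cdot \hom_\CC(A,B))$, a set of at most $t$ colors, so $F(w)$ is the required witness.

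The converse direction is entirely symmetric, run through the inverse isomorphism $G = F^{-1}$ (which exists and is again a category isomorphism with $G \circ F = \id_\CC$), so no new argument is needed. I do not expect a genuine obstacle here: the content of the lemma is bookkeeping, and the only point requiring a moment's care is the set-level identity displayed above --- specifically that surjectivity of $F$ on $\hom_\CC(A,B)$ is what guarantees the pushed-forward composite exhausts all of $\hom_\DD(F(A), F(B))$, rather than merely landing inside it. Everything else is immediate from the functor axioms recalled in the preliminaries.
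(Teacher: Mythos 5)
Your proposal is correct, and it is precisely the argument the paper leaves implicit: the paper's entire proof of this lemma is the single word ``Obvious.'' Your writeup --- pulling colorings back along the hom-set bijections, pushing witnesses forward via functoriality, using the identity $F(w)\cdot\hom_\DD(F(A),F(B)) = F\bigl(w\cdot\hom_\CC(A,B)\bigr)$, and running the converse through $F^{-1}$ --- is the standard bookkeeping the author deems obvious, so there is nothing to compare beyond noting that your version makes the surjectivity-on-hom-sets point explicit, which is indeed the only step needing care.
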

\begin{proof}
  Obvious.
\end{proof}

\begin{LEM}\label{bigrd.lem.1}
  Let $\CC$ be a category, let $A, B, C \in \Ob(\CC)$ be arbitrary, and let $k$ and $t$ be positive integers such that
  $C \longrightarrow (B)^A_{k, t}$.
  
  $(a)$ If $D \in \Ob(\CC)$ satisfies $\hom(C, D) \ne \0$ then $D \longrightarrow (B)^A_{k, t}$.

  $(b)$ If $D \in \Ob(\CC)$ satisfies $\hom(D, B) \ne \0$ then $C \longrightarrow (D)^A_{k, t}$.
\end{LEM}
\begin{proof}
  $(a)$
  Fix an $f \in \hom(C, D)$. Take any coloring $\chi : \hom(A, D) \to k$ and define $\chi' : \hom(A, C) \to k$
  by $\chi'(g) = \chi(f \cdot g)$. Then $C \longrightarrow (B)^A_{k, t}$ yields that there is a $w \in \hom(B, C)$
  such that $|\chi'(w \cdot \hom(A, B))| \le t$. Hence,
  $|\chi(f \cdot w \cdot \hom(A, B))| \le t$.
  
  $(b)$
  Fix an $f \in \hom(D, B)$. Take any coloring $\chi : \hom(A, C) \to k$.
  Then $C \longrightarrow (B)^A_{k, t}$ yields that there is a $w \in \hom(B, C)$
  such that $|\chi(w \cdot \hom(A, B))| \le t$. Since $f \cdot \hom(A, D) \subseteq \hom(A, B)$ it follows that
  $|\chi(w \cdot f \cdot \hom(A, D))| \le |\chi(w \cdot \hom(A, B))| \le t$.
\end{proof}

\begin{THM}\label{bigrd.thm.forgetful}
  Let $\BB$ and $\CC$ be categories, let $B \in \Ob(\BB)$ and $C \in \Ob(\CC)$ and assume that
  there is a forgetful functor $U : \AGE_\BB(B) \to \AGE_\CC(C)$ such that:
  \begin{itemize}
  \item
    $U(B) = C$;
  \item
    if $U(B') = C$ then $\hom_\BB(B, B') \ne \0$; and
  \item
    for every $f \in \hom_\CC(C, C)$ there is a $B' \in \Ob(\BB)$ such that $U(B') = C$ and $f \in \hom_\BB(B', B)$.
  \end{itemize}
  Then $T_\BB(A, B) \le T_\CC(U(A), C)$ for all $A \in \AGE_\BB(B)$.
\end{THM}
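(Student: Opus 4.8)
The plan is to unwind the two big-Ramsey-degree quantities and to transport a good witness from $\CC$ back to $\BB$ along the forgetful functor. Set $t = T_\CC(U(A), C)$; if $t = \infty$ there is nothing to prove, so assume $t$ is finite. Since $A \in \AGE_\BB(B)$ we have $\hom_\BB(A, B) \ne \0$, and $U$ maps $\AGE_\BB(B)$ into $\AGE_\CC(C)$, so $U(A) \in \AGE_\CC(C)$ and both degrees are meaningful. I would fix $k \ge 2$ and aim to establish $B \longrightarrow (B)^A_{k, t}$; since $k$ is arbitrary this yields $T_\BB(A, B) \le t$, as required.

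Because $U$ is forgetful I treat $\hom_\BB(A', B') \subseteq \hom_\CC(U(A'), U(B'))$ throughout. Given an arbitrary coloring $\chi : \hom_\BB(A, B) \to k$, the first step is to extend it to a coloring $\hat\chi : \hom_\CC(U(A), C) \to k$; this is legitimate because $U(B) = C$, so $\hom_\BB(A, B) \subseteq \hom_\CC(U(A), C)$, and I simply assign a fixed color to every morphism lying outside $\hom_\BB(A, B)$. Applying $C \longrightarrow (C)^{U(A)}_{k, t}$ to $\hat\chi$ produces a morphism $w_0 \in \hom_\CC(C, C)$ with $|\hat\chi(w_0 \cdot \hom_\CC(U(A), C))| \le t$.

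The second step is to lift $w_0$ to a self-morphism of $B$ in $\BB$, and this is exactly where the two remaining hypotheses enter. By the third hypothesis there is a $B' \in \Ob(\BB)$ with $U(B') = C$ and $w_0 \in \hom_\BB(B', B)$; by the second hypothesis, $U(B') = C$ forces $\hom_\BB(B, B') \ne \0$, so I may choose $v \in \hom_\BB(B, B')$ and set $w := w_0 \cdot v \in \hom_\BB(B, B)$. It then remains to verify that $w$ witnesses the arrow, i.e.\ that $|\chi(w \cdot \hom_\BB(A, B))| \le t$.

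The heart of the argument — and the step that needs the most care — is the typing bookkeeping that keeps every relevant composite inside the hom-sets on which $\chi$ and $\hat\chi$ are genuinely defined. For $g \in \hom_\BB(A, B)$ one has $v \cdot g \in \hom_\BB(A, B')$ and hence $w \cdot g = w_0 \cdot (v \cdot g) \in \hom_\BB(A, B)$, so $\chi(w \cdot g)$ makes sense and equals $\hat\chi(w \cdot g)$. On the other hand $v \cdot g \in \hom_\BB(A, B') \subseteq \hom_\CC(U(A), C)$, so $w \cdot g = w_0 \cdot (v \cdot g)$ belongs to $w_0 \cdot \hom_\CC(U(A), C)$. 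Combining these gives $\chi(w \cdot \hom_\BB(A, B)) \subseteq \hat\chi(w_0 \cdot \hom_\CC(U(A), C))$, whence the bound $|\chi(w \cdot \hom_\BB(A, B))| \le t$ follows at once. I expect no real difficulty beyond confirming that each composite lands in the correct $\BB$-hom-set; once this typing is checked, the color-counting is purely formal.
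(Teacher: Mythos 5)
Your proof is correct and takes essentially the same route as the paper's: extend the coloring to $\hom_\CC(U(A),C)$, invoke $C \longrightarrow (C)^{U(A)}_{\cdot,t}$, use the third hypothesis to realize the resulting $w_0 \in \hom_\CC(C,C)$ as a $\BB$-morphism $B' \to B$ and the second hypothesis to get $v : B \to B'$, then check that the witness $w_0 \cdot v \in \hom_\BB(B,B)$ works by a containment of color sets. The only (immaterial) difference is that you extend $\chi$ by reusing one of the existing $k$ colors where the paper introduces a fresh $(k+1)$-st ``junk'' color; both are fine, since the final count only evaluates the extended coloring on composites that land in $\hom_\BB(A,B)$, where it agrees with $\chi$.
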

\begin{proof}
  Take any $A \in \AGE_\BB(B)$ and assume that $T_\CC(U(A), C) = t$ is an integer. Fix an integer $k \ge 2$ and let
  $\chi : \hom_\BB(A, B) \to k$ be an arbitrary coloring.
  Recall that $U$ is a forgetful functor, so by the assumption we have made at the beginning of the paper,
  $\hom_\BB(A, B) \subseteq \hom_\CC(U(A), U(B)) = \hom_\CC(U(A), C)$ because $U(B) = C$.

  Define $\chi' : \hom_\CC(U(A), C) \to k + 1$ as follows: for an $f \in \hom_\CC(U(A), C)$,
  if $f \in \hom_\BB(A, B)$ put $\chi'(f) = \chi(f)$, otherwise
  put $\chi'(f) = k$. Since $C \longrightarrow (C)^{U(A)}_{k+1, t}$, there is a morphism
  $w : C \to C$ such that
  $$
    |\chi'(w \cdot \hom_\CC(U(A), C))| \le t.
  $$
  By the third assumption of the theorem, for $w : C \to C$ there is a $B' \in \Ob(\BB)$ such that $U(B') = C$
  and $w : B' \to B$. Then, clearly, $\hom_\BB(A, B') \subseteq \hom_\CC(U(A), C)$, so
  the last inequality implies
  $$
    |\chi'(w \cdot \hom_\BB(A, B'))| \le t.
  $$
  From $w \cdot \hom_\BB(A, B') \subseteq \hom_\BB(A, B)$ and the definition of $\chi'$
  it follows that $\chi'(w \cdot f) = \chi(w \cdot f)$ for all $f \in \hom_\BB(A, B')$,
  whence
  $$
    |\chi(w \cdot \hom_\BB(A, B'))| \le t.
  $$
  By the second assumption of the theorem there is a $q : B \to B'$. Clearly,
  $$
    q \cdot \hom_\BB(A, B) \subseteq \hom_\BB(A, B'),
  $$
  so the previous inequality becomes
  $$
    |\chi(w \cdot q \cdot \hom_\BB(A, B))| \le t.
  $$
  This completes the proof.
\end{proof}

The final result in this section requires a bit of terminology.
An \emph{oriented multigraph} $\Delta$ consists of a collection (possibly a class) of vertices $\Ob(\Delta)$,
a collection of arrows $\Arr(\Delta)$, and two maps $\dom, \cod : \Arr(\Delta) \to \Ob(\Delta)$ which
assign to each arrow $f \in \Arr(\Delta)$ its domain $\dom(f)$ and its codomain $\cod(f)$.
If $\dom(f) = \gamma$ and $\cod(f) = \delta$ we write briefly $f : \gamma \to \delta$.
Intuitively, an oriented multigraph is a ``category without composition''. Therefore,
each category $\CC$ can be understood as an oriented multigraph
whose vertices are the objects of the category and whose arrows are the morphisms of the category.
A \emph{multigraph homomorphism} between oriented multigraphs $\Gamma$ and $\Delta$
is a pair of maps (which we denote by the same symbol) $F : \Ob(\Gamma) \to \Ob(\Delta)$ and
$F : \Arr(\Gamma) \to \Arr(\Delta)$ such that if $f : \sigma \to \tau$ in $\Gamma$, then
$F(f) : F(\sigma) \to F(\tau)$ in $\Delta$.

Let $\CC$ be a category. For any oriented multigraph $\Delta$, a \emph{diagram in $\CC$ of shape $\Delta$}
is a multigraph homomorphism $F : \Delta \to \CC$. Intuitively, a diagram in $\CC$ is an
arrangement of objects and morphisms in $\CC$ that has the shape of~$\Delta$.
A diagram $F : \Delta \to \CC$ is \emph{commutative} if morphisms along every two paths between the same
nodes compose to give the same morphism.

A diagram $F : \Delta \to \CC$ \emph{has a commutative cocone in $\CC$} if there exists a $C \in \Ob(\CC)$
and a family of morphisms $(e_\delta : F(\delta) \to C)_{\delta \in \Ob(\Delta)}$ such that for every
arrow $g : \delta \to \gamma$ in $\Arr(\Delta)$ we have $e_\gamma \cdot F(g) = e_\delta$:
$$
  \xymatrix{
     & C & \\
    F(\delta) \ar[ur]^{e_\delta} \ar[rr]_{F(g)} & & F(\gamma) \ar[ul]_{e_\gamma}
  }
$$
(see Fig.~\ref{nrt.fig.3} for an illustration).
We say that $C$ together with the family of morphisms
$(e_\delta)_{\delta \in \Ob(\Delta)}$ is a \emph{commutative cocone in $\CC$ over the diagram~$F$
whose tip is~$C$}.

\begin{figure}
  $$
  \xymatrix{
    & & & & & \exists C &
  \\
    \bullet & \bullet & \bullet
    & & B_1 \ar@{.>}[ur] & B_2 \ar@{.>}[u] & B_1 \ar@{.>}[ul]
  \\
    \bullet \ar[u] \ar[ur] & \bullet \ar[ur] \ar[ul] & \bullet \ar[ul] \ar[u]
    & & A_1 \ar[u]^{f_1} \ar[ur]_(0.3){f_2} & A_2 \ar[ur]^(0.3){f_4} \ar[ul]_(0.3){f_3} & A_2 \ar[ul]^(0.3){f_5} \ar[u]_{f_6}
  \\
    & \Delta \ar[rrrr]^F  & & & & \CC  
  }
  $$
  \caption{A diagram in $\CC$ (of shape $\Delta$) with a commutative cocone}
  \label{nrt.fig.3}
\end{figure}

Consider an acyclic, bipartite, not necessarily finite digraph where all the arrows go from one class of vertices into the other
and the out-degree of all the vertices in the first class is~2:
$$
  \xymatrix{
    \bullet & \bullet & \bullet & \dots \\
    \bullet \ar[u] \ar[ur] & \bullet \ar[ur] \ar[ul] & \bullet \ar[u] \ar[ur] & \dots 
  }
$$
\noindent
Such a digraph will be referred to as a \emph{binary digraph}.
A \emph{walk} between two elements $x$ and $y$ of the top row of a binary digraph
consists of some vertices $x = t_0$, $t_1$, \dots, $t_k = y$ of the top row, some vertices
$b_1$, \dots, $b_k$ of the bottom row, and arrows $b_{j} \to t_{j-1}$ and $b_{j} \to t_{j}$, $1 \le j \le k$:
$$
  \xymatrix{
    \llap{$x = \mathstrut$}t_0 & t_1 & \dots & t_{k-1} & t_k\rlap{$\mathstrut = y$} \\
    b_1 \ar[u] \ar[ur] & b_2 \ar[u] \ar[ur] & \dots \ar[u] \ar[ur] & b_k \ar[u] \ar[ur]
  }
$$
A binary digraph is \emph{connected} if there is a walk between any pair of distinct vertices of the top row.
A \emph{connected component} of a binary digraph $\Delta$ is a maximal (with respect to inclusion) set $C$ of vertices
of the top row such that there is a walk between any pair of distinct vertices from~$C$.
Note that $b_j$'s are not required to be distinct, so
this is an example of a binary digraph with two connected components:
$$
  \xymatrix{
    \bullet & \bullet & \bullet & \bullet & \bullet \\
    & \bullet \ar[u] \ar[ur] \ar[ul] & & \bullet \ar[u] \ar[ur] & \bullet \ar[u] \ar[ul]
  }
$$

Let $\CC$ be a category and let $A, B \in \Ob(\CC)$.
An \emph{$(A, B)$-diagram} in a category $\CC$ is a diagram $F : \Delta \to \CC$ where $\Delta$ is a binary digraph,
$F$ takes the bottom row of $\Delta$ onto $A$, and takes the top row of $\Delta$ onto $B$, Fig.~\ref{nrt.fig.2}.

\begin{figure}
  $$
  \xymatrix{
    \bullet & \bullet & \bullet
    & & B & B & B
  \\
    \bullet \ar[u] \ar[ur] & \bullet \ar[ur] \ar[ul] & \bullet \ar[ul] \ar[u]
    & & A \ar[u]^{f_1} \ar[ur]_(0.3){f_2} & A \ar[ur]^(0.3){f_4} \ar[ul]_(0.3){f_3} & A \ar[ul]^(0.3){f_5} \ar[u]_{f_6}
  \\
    & \Delta \ar[rrrr]^F  & & & & \CC  
  }
  $$
  \caption{An $(A, B)$-diagram in $\CC$}
  \label{nrt.fig.2}
\end{figure}

\begin{figure}
  $$
  \xymatrix{
    & & & & & C & & \AGE_\CC(C)
  \\
    & & & & &  &
  \\
    \bullet & \bullet & \bullet
    & & B \ar[uur] \ar@/^7mm/@{.>}[rrr] & B \ar[uu] \ar@/^3mm/@{.>}[rr] & B \ar[uul] \ar@{.>}[r] & D
  \\
    \bullet \ar[u] \ar[ur] & \bullet \ar[ur] \ar[ul] & 
    & & A \ar[u] \ar[ur] & A \ar[ur] \ar[ul] & & \AGE_\BB(B)
  \\
    & \Delta \ar[rrrr]^F  & & & & \AGE_\BB(B)
  \save "2,4"."4,8"*[F]\frm{} \restore
  }
  $$
  \caption{The setup of Theorem~\ref{bigrd.thm.1}}
  \label{bigrd.fig.subcat}
\end{figure}
\begin{THM}\label{bigrd.thm.1}
  Let $\CC$ be a category whose every morphism is monic and let $\BB$ be a (not necessarily full) subcategory of $\CC$.
  Let $B \in \Ob(\BB)$ and $C \in \Ob(\CC)$ be such that $\hom_\CC(B, C) \ne \0$ so that
  $\AGE_\BB(B)$ is a subcategory of $\AGE_\CC(C)$. Take any $A \in \AGE_\BB(B)$ and assume that
  for every $(A, B)$-diagram $F : \Delta \to \AGE_\BB(B)$ the following holds:
  if $F$ (which is an $(A, B)$-diagram in $\AGE_\CC(C)$ as well)
  has a commuting cocone in $\AGE_\CC(C)$ whose tip is $C$, then $F$ has a commuting cocone
  in~$\AGE_\BB(B)$, Fig.~\ref{bigrd.fig.subcat}. Then $T_\BB(A, B) \le T_\CC(A, C)$.
\end{THM}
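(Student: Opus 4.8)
The plan is to transport the colouring from $\BB$ up into $\CC$, solve it there using the finite $\CC$-degree, and pull the solution back through a single $(A,B)$-diagram furnished by the hypothesis. Assume $t := T_\CC(A, C)$ is finite (otherwise there is nothing to prove), fix $k \ge 2$ and an arbitrary colouring $\chi : \hom_\BB(A, B) \to k$; the goal is a $w \in \hom_\BB(B, B)$ with $|\chi(w \cdot \hom_\BB(A, B))| \le t$. First I would fix some $e \in \hom_\CC(B, C)$, which exists by assumption. Since every morphism of $\CC$ is monic, $e$ is left-cancellable, so each $h \in \hom_\CC(A, C)$ has at most one factorisation $h = e \cdot g$ with $g \in \hom_\BB(A, B)$; this lets me extend $\chi$ to $\chi' : \hom_\CC(A, C) \to k+1$ by $\chi'(h) = \chi(g)$ whenever $h = e \cdot g$ for some $g \in \hom_\BB(A, B)$, and $\chi'(h) = k$ otherwise. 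As $T_\CC(A,C)=t$ we have $C \longrightarrow (C)^A_{k+1, t}$, so there is $w_0 \in \hom_\CC(C, C)$ with $S := \chi'(w_0 \cdot \hom_\CC(A, C))$ of size at most $t$.

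The key reduction is that it now suffices to find $w, w' \in \hom_\BB(B, B)$ with $e \cdot w = w_0 \cdot e \cdot w'$. Indeed, for any $g \in \hom_\BB(A, B)$ one then has $e \cdot (w \cdot g) = (e \cdot w) \cdot g = (w_0 \cdot e \cdot w') \cdot g = w_0 \cdot \bigl(e \cdot (w' \cdot g)\bigr)$. The morphism on the left is the good factorisation of an element of $\hom_\CC(A,C)$, so its $\chi'$-colour equals $\chi(w \cdot g)$; the morphism on the right lies in $w_0 \cdot \hom_\CC(A, C)$, so its $\chi'$-colour lies in $S$. As the two morphisms coincide, $\chi(w \cdot g) \in S$ for every $g$, whence $|\chi(w \cdot \hom_\BB(A, B))| \le |S| \le t$ and $w$ is the required witness.

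To produce such $w, w'$ I would apply the $(A,B)$-diagram hypothesis to the two morphisms $e,\, w_0 \cdot e : B \to C$. Build the binary digraph $\Delta$ with two top vertices, both sent by $F$ to $B$, and with one bottom vertex for each pair $(f, f') \in \hom_\BB(A, B)^2$ satisfying $e \cdot f = w_0 \cdot e \cdot f'$, the two outgoing arrows of that vertex carrying $f$ (to the first top vertex) and $f'$ (to the second); thus $F$ is an $(A,B)$-diagram in $\AGE_\BB(B)$. By the very choice of the bottom vertices, the pair $(e,\, w_0 \cdot e)$ is a commuting cocone over $F$ in $\AGE_\CC(C)$ with tip $C$. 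The hypothesis then yields a commuting cocone over $F$ in $\AGE_\BB(B)$, with some tip $D$; since $D \in \AGE_\BB(B)$ there is $r \in \hom_\BB(D, B)$, and post-composing the legs with $r$ I may assume the tip is $B$ and read off the two legs $w, w' \in \hom_\BB(B, B)$ over the two top vertices.

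The hard part will be the final matching. The descended cocone only delivers the $\BB$-internal identities $w \cdot f = w' \cdot f'$ indexed by the bottom vertices, whereas the reduction needs the external $\CC$-equality $e \cdot w = w_0 \cdot e \cdot w'$; a naive reading of the cocone does not force the factor $w_0$ to reappear. Bridging this gap is exactly what the hypothesis is tuned for, and I expect it to require that the binary digraph be connected and rich enough that the shared copies of $A$ pin both legs down: here one uses that $\CC$ is monic (so that agreement of two morphisms on a jointly epic family of copies of $A$ forces their equality) together with the walk-combinatorics of $\Delta$. To keep the gluing pattern under control I would sort the copies of $A$ by their $\chi'$-colour, using that at most $t$ colours occur inside $w_0 \cdot C$, and check that each connected component of $\Delta$ is realised coherently; verifying that this indeed yields $e \cdot w = w_0 \cdot e \cdot w'$ is the technical crux of the argument.
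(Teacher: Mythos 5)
Your overall skeleton (extend the colouring along a monic $e \in \hom_\CC(B,C)$, solve in $\CC$, then use the $(A,B)$-diagram hypothesis to descend) starts out parallel to the paper, but your key reduction rests on a step that the hypothesis cannot deliver, and you have correctly identified it yourself: you need the $\CC$-equality $e \cdot w = w_0 \cdot e \cdot w'$ (or at least the inclusion $e \cdot w \cdot \hom_\BB(A,B) \subseteq w_0 \cdot \hom_\CC(A,C)$). This is a genuine gap, not a technical crux that monicity and walk-combinatorics can close. The hypothesis only asserts the \emph{existence} of some commuting cocone in $\AGE_\BB(B)$: an object $D$ and legs $d_1, d_2 : B \to D$ satisfying the purely $\BB$-internal equalities $d_1 \cdot f = d_2 \cdot f'$ for the bottom vertices $(f,f')$ of your diagram. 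That cocone comes with no morphism to $C$ and no compatibility whatsoever with $e$ or $w_0$; after composing with $r : D \to B$ you only know $w \cdot f = w' \cdot f'$ for those particular pairs, and there is no way to convert such $\BB$-equalities back into a $\CC$-equality in which $w_0$ reappears. A joint-epicness argument cannot rescue this: to apply it you would need $e \cdot w \cdot g = w_0 \cdot e \cdot w' \cdot g$ for all $g \in \hom_\BB(A,B)$ as \emph{input}, and that is exactly the statement you are missing. The deeper structural problem is that your diagram (and hence the cocone you request) depends on the Ramsey solution $w_0$, i.e.\ on the colouring, and you then demand that an abstractly-existing cocone be compatible with this pre-chosen $w_0$ --- a far stronger property than the hypothesis grants.

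The paper's proof is organized precisely to avoid ever needing an equation in $\CC$ out of the descended cocone; the information flows in the opposite direction. It builds a single, colouring-independent $(A,B)$-diagram whose top row is indexed by \emph{all} of $\hom_\CC(B,C) = \{e_i : i \in I\}$, with a bottom copy of $A$ for every coincidence $e_i \cdot u = e_j \cdot v$, and obtains the cocone $(D, (f_i)_{i \in I})$ in $\AGE_\BB(B)$ once and for all. Then, given an arbitrary $k$-colouring of $\hom_\BB(A,D)$ (not of $\hom_\BB(A,B)$!), it defines a $(k+1)$-colouring of $\hom_\CC(A,C)$ by transporting along the correspondence $e_s \cdot u \mapsto f_s \cdot u$; the cocone equalities together with monicity are used exactly where they suffice --- to prove this colouring is well defined (if $e_s \cdot u = e_t \cdot v$ then $f_s \cdot u = f_t \cdot v$). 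Applying $C \longrightarrow (B)^A_{k+1,t}$ produces some $e_\ell \in \hom_\CC(B,C)$, and the corresponding leg $f_\ell$ automatically witnesses $D \longrightarrow (B)^A_{k,t}$, since $\chi(f_\ell \cdot u) = \chi'(e_\ell \cdot u)$ by the very definition of $\chi'$; finally $\hom_\BB(D,B) \ne \0$ and Lemma~\ref{bigrd.lem.1}~$(a)$ transfer the arrow relation from $D$ back to $B$. If you want to repair your argument, this reversal --- indexing the diagram by the whole of $\hom_\CC(B,C)$ before any colouring is chosen, and defining the $\CC$-colouring \emph{from} the cocone rather than asking the cocone to match a $\CC$-colouring already solved --- is the missing idea.
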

\begin{proof}
  Take the categories $\BB$ and $\CC$, objects $B \in \Ob(\BB)$, $C \in \Ob(\CC)$ and $A \in \AGE_\BB(B)$
  as above. If $T_\CC(A, C) = \infty$ then, trivially, $T_\BB(A, B) \le T_\CC(A, C)$.
  Assume, therefore, that $T_\CC(A, C) = t$ is an integer and take any $k \ge 2$. Then $C \longrightarrow (C)^{A}_{k+1,t}$, so
  $C \longrightarrow (B)^{A}_{k+1,t}$ because of Lemma~\ref{bigrd.lem.1}~$(b)$.

  Let us now construct an $(A, B)$-diagram $F$ in $\AGE_\BB(B)$ as follows. Let $\hom(B, C) = \{e_i : i \in I\}$.
  Intuitively, for each $i \in I$ we add a copy of $B$ to the diagram, and whenever $e_i \cdot u = e_j \cdot v$
  for some $u, v \in \hom_\BB(A, B)$ we add a copy of $A$ to the diagram together with two arrows:
  one going into the $i$th copy of $B$ labelled by $u$ and another one going into the $j$th copy of $B$ labelled by~$v$.
  Note that, by construction, this $(A, B)$-diagram has a commuting cocone in $\AGE_\CC(C)$ whose tip is~$C$:
  $$
  \xymatrix{
    & & C & & \AGE_\CC(C)
  \\
    B \ar@/^5mm/[urr] & B \ar[ur]^(0.4){e_i} & \dots & B \ar[ul]_(0.4){e_j} & B \ar@/_5mm/[ull]
  \\
    A \ar[u] \ar@/^2mm/[ur] & A \ar[urr]_{v} \ar[u]^(0.4){u} & \dots & A \ar[ur] \ar@/_2mm/[ul] & 
  }
  $$
  Formally, let $\Delta$ be the binary digraph whose objects are
  $\Ob(\Delta) = I \union S$ where
  $
    S = \{(u, v, i, j) : i, j \in I; \; i \ne j; \; u, v \in \hom_\BB(A, B); \; e_i \cdot u = e_j \cdot v\}
  $,
  and whose arrows are of the form $u : (u, v, i, j) \to i$ and $v : (u, v, i, j) \to j$.
  Let $F : \Delta \to \AGE_\BB(B)$ be the following $(A, B)$-diagram whose action on objects is:
  $F(i) = B$ for $i \in I$ and $F((u, v, i, j)) = A$ for $(u, v, i, j) \in S$,
  and whose action on morphisms is $F(w) = w$:
  $$
  \xymatrix{
    i &  & j
    & & B &  & B
  \\
      & (u, v, i, j) \ar[ur]_v \ar[ul]^u &  
    & &   & A \ar[ur]_v \ar[ul]^u & 
  \\
    & \Delta \ar[rrrr]^F  & & & & \AGE_\BB(B)  
  }
  $$
  As we have seen in the informal discussion above, $F$ has a commuting cocone in $\AGE_\CC(C)$ whose tip is~$C$,
  so, by the assumption, $F$ has a commuting cocone in~$\AGE_\BB(B)$. Therefore, there is
  a $D \in \Ob(\AGE(B))$ and morphisms $f_i \in \hom_\BB(B, D)$, $i \in I$, such that the following
  $(A, B)$-diagram in $\AGE_\BB(B)$ commutes:
  $$
  \xymatrix{
    & & D & & \AGE_\BB(B)
  \\
    B \ar@/^4mm/[urr] & B \ar[ur]^(0.4){f_i} & \dots & B \ar[ul]_(0.4){f_j} & B \ar@/_4mm/[ull]
  \\
    A \ar[u] \ar@/^1mm/[ur] & A \ar[urr]_(0.4){v} \ar[u]^(0.4){u} & \dots & A \ar[ur] \ar[ul] & 
  }
  $$
  Let us show that in $\AGE_\BB(B)$ we have $D \longrightarrow (B)^A_{k, t}$. Take any $k$-coloring
  $
    \hom_\BB(A, D) = \calX_0 \union \calX_1 \union \dots \union \calX_{k - 1}
  $
  and define a $k+1$-coloring
  $
    \hom_\CC(A, C) = \calX'_0 \union \calX'_1 \union \dots \union \calX'_{k - 1} \union \calX'_{k}
  $
  as follows. For $j < k$ let
  $$
    \calX'_{j} = \{e_s \cdot u : s \in I; \; u \in \hom_\BB(A, B); \;  f_s \cdot u \in \calX_j \},
  $$
  and then let
  $$
    \calX'_k = \hom_\CC(A, C) \setminus \UNION_{j < k} \calX'_{j}.
  $$
  Let us show that $\calX'_i \sec \calX'_j = \0$ whenever $i \ne j$. By the definition of $\calX'_k$
  it suffices to consider the case where $i < k$ and $j < k$.
  Assume, to the contrary, that there is an $h \in \calX'_{i} \sec \calX'_{j}$ for some $i, j < k$
  such that~$i \ne j$.
  Then $h = e_s \cdot u$ for some $s \in I$ and some $u \in \hom_\BB(A, B)$ such that $f_s \cdot u \in \calX_i$, and
  $h = e_t \cdot v$ for some $t \in I$ and some $v \in \hom_\BB(A, B)$ such that $f_t \cdot v \in \calX_j$.
  Then $e_s \cdot u = h = e_t \cdot v$. Clearly, $s \ne t$ and we have that $(u, v, s, t) \in \Ob(\Delta)$.
  (Suppose, to the contrary, that $s = t$. Then
  $e_s \cdot u = e_s \cdot v$ implies $u = v$ because all the morphisms in $\CC$ are monic.
  But then $\calX_i \ni f_s \cdot u = f_s \cdot v = f_t \cdot v \in \calX_j$, which contradicts the assumption that
  $\calX_i \sec \calX_j = \0$.) Consequently,
  $f_s \cdot u = f_t \cdot v$ because $D$ and the morphisms $f_i : B \to D$, $i \in I$, form
  a commuting cocone over $F$ in~$\AGE_\BB(B)$. Therefore, $f_s \cdot u = f_t \cdot v \in \calX_i \sec \calX_j$,
  which is not possible.

  Let $\chi : \hom_\BB(A, D) \to k$ be the coloring such that $\chi(\calX_i) = \{i\}$ for all $i < k$, and let
  $\chi' : \hom_\CC(A, C) \to k + 1$ be the coloring such that $\chi'(\calX'_i) = \{i\}$ for all $i < k + 1$.
  Since $C \longrightarrow (B)^{A}_{k + 1, t}$ in $\CC$, there is an $e_\ell \in \hom_\CC(B, C)$ such that
  $
    |\chi'(e_\ell \cdot \hom_\CC(A, B))| \le t
  $.
  Let us show that
  $
    \chi(f_\ell \cdot \hom_\BB(A, B)) \subseteq \chi'(e_\ell \cdot \hom_\CC(A, B))
  $.
  Take any $j \in \chi(f_\ell \cdot \hom_\BB(A, B))$. Then there is a $u \in \hom_\BB(A, B)$ such that
  $\chi(f_\ell \cdot u) = j$, or, equivalently, $f_\ell \cdot u \in \calX_j$. By definition of $\calX'_j$,
  we then have that $e_\ell \cdot u \in \calX'_j$, whence $j \in \chi'(e_\ell \cdot \hom_\CC(A, B))$.
  Hence,
  $
    |\chi(f_\ell \cdot \hom_\BB(A, B))| \le |\chi'(e_\ell \cdot \hom_\CC(A, B))| \le t
  $,
  which completes the proof of $D \longrightarrow (B)^A_{k, t}$.

  To complete the proof of the theorem, note that $D \in \Ob(\AGE_\BB(B))$ and Lemma~\ref{bigrd.lem.1}~$(a)$ ensure that
  $B \longrightarrow (B)^A_{k, t}$ in~$\BB$. Therefore, we conclude that $T_\BB(A, B) \le t = T_\CC(A, C)$.
\end{proof}

\section{Finite big Ramsey degrees in universal structures}
\label{fbrd.sec.misc}

In this section we are going to show that many universal structures, not only \Fraisse\ limits,
support finite big Ramsey degrees of finite structures that they embed. Nevertheless,
the universal structures we will be discussing here are all related to certain \Fraisse\ limits.

Let $\CC$ be a category. The objects $C, D \in \Ob(\CC)$ are \emph{hom-equivalent in $\CC$}
if $\hom(C, D) \ne \0$ and $\hom(D, C) \ne \0$. The following is an immediate consequence of
Lemma~\ref{bigrd.lem.1}.

\begin{LEM}
  Let $C, D \in \Ob(\CC)$ be hom-equivalent objects in a category $\CC$. Then (trivially) $\AGE(C) = \AGE(D)$,
  and for all $A \in \Ob(\AGE(C))$ we have that $T(A, C) = T(A, D)$.
\end{LEM}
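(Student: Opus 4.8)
The plan is to read everything off Lemma~\ref{bigrd.lem.1}, exactly as the sentence preceding the statement suggests. First I fix morphisms $\phi \in \hom(C, D)$ and $\psi \in \hom(D, C)$, which exist precisely because $C$ and $D$ are hom-equivalent. The equality of ages is then a one-line composition argument: if $A \in \Ob(\AGE(C))$ then some $g : A \to C$ exists, and $\phi \cdot g \in \hom(A, D)$ shows $A \in \Ob(\AGE(D))$; the reverse inclusion follows symmetrically by composing with $\psi$. Hence $\AGE(C) = \AGE(D)$, and in particular the assertion $T(A, C) = T(A, D)$ makes sense for every $A$ in this common age.

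For the degrees I would first observe that hom-equivalence is symmetric in $C$ and $D$, so it suffices to establish $T(A, C) \le T(A, D)$; interchanging the roles of $C$ and $D$ then gives the reverse inequality. If $T(A, D) = \infty$ there is nothing to prove, so I assume $T(A, D) = t$ is an integer. Fixing an arbitrary $k \ge 2$, the definition of the big Ramsey degree gives $D \longrightarrow (D)^A_{k, t}$, and the goal is to transform this into $C \longrightarrow (C)^A_{k, t}$.

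The key step is a two-fold application of Lemma~\ref{bigrd.lem.1} to the relation $D \longrightarrow (D)^A_{k, t}$. First I apply part $(a)$ with ambient object $D$ along the morphism $\psi : D \to C$; this moves the tip forward and yields $C \longrightarrow (D)^A_{k, t}$. Then I apply part $(b)$ with ambient object $C$ along the morphism $\phi : C \to D$, so that $\hom(C, D) \ne \0$ supplies the required hypothesis and the middle object is replaced, giving $C \longrightarrow (C)^A_{k, t}$. Since $k \ge 2$ was arbitrary, this delivers $T(A, C) \le t = T(A, D)$.

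I do not expect a genuine obstacle, since the result is indeed immediate; the only point demanding care is bookkeeping — matching the objects $C$, $D$, $B$ that appear in the statement of Lemma~\ref{bigrd.lem.1} with the two objects $C$, $D$ of the present lemma, and feeding $\psi$ into part $(a)$ and $\phi$ into part $(b)$ in the correct directions. Combining $T(A, C) \le T(A, D)$ with its symmetric counterpart $T(A, D) \le T(A, C)$ (the latter also covering the situation in which one of the two degrees is infinite) completes the argument.
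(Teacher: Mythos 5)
Your proposal is correct and follows essentially the same route as the paper: both arguments apply Lemma~\ref{bigrd.lem.1} twice along the two hom-equivalence morphisms to convert $D \longrightarrow (D)^A_{k,t}$ into $C \longrightarrow (C)^A_{k,t}$ (the paper runs it in the opposite direction, starting from $T(A,C)$), and then invoke symmetry for the reverse inequality and the infinite case. Your careful matching of the objects in Lemma~\ref{bigrd.lem.1} with parts $(a)$ and $(b)$ is exactly the bookkeeping the paper leaves implicit.
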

\begin{proof}
  Take any $A \in \Ob(\AGE(C))$ and let $t = T(A, C) < \infty$.
  Let $k \ge 2$ be an arbitrary integer. Then $C \longrightarrow(C)^A_{k, t}$.
  Since $C$ and $D$ are hom-equivalent we have that
  $\hom(C, D) \ne \0$ and $\hom(D, C) \ne \0$, so by Lemma~\ref{bigrd.lem.1} we have that
  $D \longrightarrow(D)^A_{k, t}$. Therefore, $T(A, D) \le t = T(A, C)$.
  By the same argument $T(A, C) \le T(A, D)$.
  
  Assume, now that $T(A, C) = \infty$. Then $T(A, D) = \infty$, for, otherwise, the argument above
  would force $T(A, C) < \infty$.
\end{proof}

\begin{EX}
  Let $\calC$ be a countable chain that embeds $\calQ = (\QQ, \Boxed<)$. Then 
  every finite chain has finite big Ramsey degree in $\calC$ because
  $\calC$ is hom-equivalent to $\calQ$ in~$\ChEmb$.
  (Recall that the morphisms in $\ChEmb$ are the embeddings, so two chains $\calC$ and $\calD$ are
  hom-equivalent in $\ChEmb$ if $\calD$ embeds $\calC$ and $\calC$ embeds $\calD$.)
  Moreover, $T(\calA, \calC) = T(\calA, \calQ)$ for every finite chain~$\calA$.
\end{EX}

\begin{EX}\label{bigrd.ex.CRP-gra}
  Let $\calG$ be a countable graph that embeds the random graph $\calR$. Then 
  every finite graph has finite big Ramsey degree in $\calG$ because
  $\calG$ is hom-equivalent to $\calR$ in~$\GraEmb$.
  Moreover, $T(\calA, \calG) = T(\calA, \calR)$ for every finite graph~$\calA$.
  (Recall, again, that the morphisms in $\GraEmb$ are the embeddings.)
\end{EX}

We are now going to show that each \Fraisse\ limit $\calF$ whose age has the strong amalgamation property gives rise
to a countable structure which is not a \Fraisse\ limit and still every member of its age has finite big Ramsey degree in~it.
A class $\KK$ of finite structures satisfies the \emph{strong amalgamation property (SAP)} if
for all $\calA, \calB, \calC \in \KK$ and embeddings $f : \calA \hookrightarrow \calB$ and
$g : \calA \hookrightarrow \calC$ there exist $\calD \in \KK$ and embeddings $f' : \calB \hookrightarrow \calD$ and
$g' : \calC \hookrightarrow \calD$ such that $f' \circ f = g' \circ g$ and $f'(B) \sec g'(C) =
f' \circ f(A) = g' \circ g(A)$.

\begin{THM}\label{digrd.thm.UNIV-SAP}
  Let $\calF$ be a countably infinite relational structure such that $\Age(\calF)$ has the strong amalgamation
  property. Let $\KK = \{(\calA, \Boxed\prec) : \calA \in \AGE(\calF)$ and $\prec$ is a linear order on $A$ such that $(A, \Boxed\prec)$
  is finite or has order type~$\omega\}$, and let $\sqsubset$ be a linear order on $F$ such that $(F, \Boxed\sqsubset)$
  has order type~$\omega$. Then:
  
  $(a)$ $\AGE(\calF, \Boxed\sqsubset) = \KK$.
  
  $(b)$ For each $(\calA, \Boxed\prec) \in \KK$ we have that $T((\calA, \Boxed\prec), (\calF, \Boxed\sqsubset)) \le
  T(\calA, \calF)$, or, in other words, if $\calA$ has finite big Ramsey degree in $\calF$ then
  $(\calA, \Boxed\prec)$ has finite big Ramsey degree in $(\calF, \Boxed\sqsubset)$.
\end{THM}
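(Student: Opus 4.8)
The plan is to prove part $(a)$ directly and then obtain part $(b)$ as an application of Theorem~\ref{bigrd.thm.1}, using the forgetful functor that strips the linear order. The ambient category for $\BB$ is the category of pairs $(\calA, \Boxed\prec)$ from $\KK$ together with order-preserving embeddings, and $\CC$ is $\AGE_{\REL_\Theta}(\calF)$; the obvious functor $U$ forgets~$\sqsubset$. Note that in all these categories the morphisms are embeddings, hence injective maps, so every morphism is monic and the hypothesis ``$\CC$ has only monic morphisms'' of Theorem~\ref{bigrd.thm.1} is satisfied.

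For part $(a)$, I would argue both inclusions. The inclusion $\AGE(\calF, \Boxed\sqsubset) \subseteq \KK$ is immediate: any finite or countable substructure of $(\calF, \Boxed\sqsubset)$ is a structure $\calA \in \AGE(\calF)$ equipped with the restriction of $\sqsubset$, which is a linear order that is either finite or of order type~$\omega$ (a suborder of $\omega$ is finite or of order type~$\omega$). For the reverse inclusion $\KK \subseteq \AGE(\calF, \Boxed\sqsubset)$, take $(\calA, \Boxed\prec) \in \KK$; I need an embedding $\calA \hookrightarrow \calF$ whose image respects the orders, i.e.\ that carries $\prec$ into~$\sqsubset$. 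The finite case is the essential one. Here I would enumerate $A = \{a_0 \prec a_1 \prec \dots \prec a_{n-1}\}$ and build the embedding one point at a time, at each stage using the \emph{strong} amalgamation property of $\Age(\calF)$ to realize the required one-point extension inside $\calF$ while having enough freedom to place the new image arbitrarily far out in the $\sqsubset$-order. Concretely, SAP guarantees that a point can be adjoined to a partial copy \emph{without forcing it to coincide with} any previously used vertex, so after embedding $a_0 \prec \dots \prec a_{i-1}$ as $b_0 \sqsubset \dots \sqsubset b_{i-1}$ I may embed $a_i$ as a point $b_i$ that is genuinely new, and by ultrahomogeneity/universality of $\calF$ I can additionally push $b_i$ past all of $b_0, \dots, b_{i-1}$ in the $\sqsubset$-order (which has type~$\omega$, so ``later than finitely many points'' is always possible). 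The countable case of order type~$\omega$ then follows by taking a union of an increasing chain of such finite order-embeddings, which converges to an embedding of $(\calA, \Boxed\prec)$ because $\AAA$ is closed under unions of countable chains.

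For part $(b)$, I would verify the single hypothesis of Theorem~\ref{bigrd.thm.1}: for every $((\calA,\Boxed\prec),(\calB,\Boxed\prec'))$-diagram $F : \Delta \to \AGE_\BB((\calB,\Boxed\prec'))$ that has a commuting cocone in $\AGE_\CC(\calF)$ with tip $\calF$, the diagram already has a commuting cocone in $\AGE_\BB((\calB,\Boxed\prec'))$. The content is this: a commuting cocone over such a diagram amounts to a finite-or-$\omega$ system of copies of $\calB$ that is consistently amalgamated in the unordered structure~$\calF$; I must produce a \emph{single ordered} structure from $\KK$, with order type finite or~$\omega$, into which all the copies of $\calB$ embed compatibly. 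Since $\Age(\calF)$ has SAP, the underlying unordered amalgam exists inside $\calF$; the only additional task is to equip the tip with a linear order of type~$\omega$ extending (the images of) the orders on the copies of $\calB$ so that every $e_\delta$ becomes order-preserving. Because each $\calB$ carries an order of type at most~$\omega$ and there are at most countably many copies, such a compatible $\omega$-ordering of the (countable) vertex set of the amalgam can be constructed by a back-and-forth/interleaving argument, giving a tip $D \in \KK = \AGE(\calF,\Boxed\sqsubset)$. With the hypothesis verified, Theorem~\ref{bigrd.thm.1} yields $T_\BB((\calA,\Boxed\prec),(\calB,\Boxed\prec')) \le T_\CC(\calA,\calF)$, and specializing $(\calB,\Boxed\prec') = (\calF,\Boxed\sqsubset)$ gives the claim.

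The main obstacle is the order-compatibility bookkeeping in both parts. In $(a)$ it is ensuring that the new image point can always be placed \emph{past} all previous points in the $\sqsubset$-order while still realizing the correct relational type --- this is exactly where strong (as opposed to plain) amalgamation is needed, since ordinary AP might force an unwanted identification and thereby an unwanted order relation. In $(b)$ the analogous difficulty is producing the $\omega$-order on the tip of the cocone that simultaneously extends all the finitely-or-countably-many given orders; I expect this to reduce to a standard fact that any linear order embeddable as a union of type-$\le\omega$ pieces on a countable set can be re-realized as (or extended to) an order of type~$\omega$, but it must be checked that the extension can be chosen to keep every cocone leg order-preserving.
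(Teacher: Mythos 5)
In part $(a)$ your induction has the right shape, but the crucial step is unjustified. Strong amalgamation over the image $\{b_0,\dots,b_{i-1}\}$ only guarantees that the new point is \emph{distinct} from the previously used vertices; nothing yet places it $\sqsubset$-after $b_{i-1}$. Your appeal to ``ultrahomogeneity/universality to push $b_i$ past all of $b_0,\dots,b_{i-1}$'' is circular: since $\sqsubset$ is not part of the structure $\calF$, automorphisms of $\calF$ give no control over $\sqsubset$, and ultrahomogeneity only converts the \emph{existence} of a point realizing the same quantifier-free type over $\{b_0,\dots,b_{i-1}\}$ and lying $\sqsubset$-after them into an automorphism moving $b_i$ there --- but that existence is exactly what has to be proved. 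The paper closes this gap with one extra idea: perform the strong amalgamation not over the image of $f_j$ but over the restriction of $\calF$ to the entire $\sqsubset$-initial segment $\{x_0,\dots,x_{i_j}\}$ containing the image, which is finite precisely because $(F,\Boxed\sqsubset)$ has order type $\omega$. Then SAP forces the new point to avoid the whole initial segment, hence to lie $\sqsubset$-after $x_{i_j}$, and no ``pushing'' is needed. Your sketch is repairable this way, but as written the key step of the proof is missing.

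Part $(b)$ uses the wrong theorem, and the hypothesis you propose to verify is in fact false. Theorem~\ref{bigrd.thm.1} requires $\BB$ to be a \emph{subcategory} of $\CC$ and concludes $T_\BB(A,B) \le T_\CC(A,C)$ for the \emph{same} object $A$; your $\BB$ (ordered structures with order-preserving embeddings) is not a subcategory of $\CC = \AGE(\calF)$ --- the forgetful functor is not even injective on objects --- and your conclusion $T_\BB((\calA,\Boxed\prec),(\calB,\Boxed{\prec'})) \le T_\CC(\calA,\calF)$ silently crosses the functor $U$, which Theorem~\ref{bigrd.thm.1} never licenses. Worse, the cocone-transfer condition you would need (a commuting cocone for $U \circ F$ in $\AGE_\CC(\calF)$ with tip $\calF$ yields one for $F$ in the ordered category) fails outright. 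Take $\calF = \calR$, let $\calA$ be a single vertex, and pick four distinct points with $p \sqsubset q'$, $p' \sqsubset q$ and both pairs $\{p,q'\}$, $\{q,p'\}$ non-adjacent; form the diagram with two copies of $(\calF,\Boxed\sqsubset)$ on top and two copies of $\calA$ below, whose arrows force the identifications $e_1(p)=e_2(q)$ and $e_1(q')=e_2(p')$. Taking $e_1 = \id$ and extending $q \mapsto p$, $p' \mapsto q'$ to an automorphism $e_2$ of $\calR$ gives a commuting cocone in $\AGE_\CC(\calF)$ with tip $\calF$; but any ordered cocone $(f_1,f_2)$ would give $f_2(q) = f_1(p) \prec'' f_1(q') = f_2(p') \prec'' f_2(q)$, a cycle, so no compatible $\omega$-ordering exists and no back-and-forth argument can produce one. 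The tool designed for exactly this situation is Theorem~\ref{bigrd.thm.forgetful}: define $U : \AGE(\calF,\Boxed\sqsubset) \to \AGE(\calF)$ by forgetting the order and verify its three conditions, namely $U(\calF,\Boxed\sqsubset) = \calF$; every $(\calF,\Boxed{\sqsubset'})$ in the age with $U(\calF,\Boxed{\sqsubset'}) = \calF$ admits an embedding from $(\calF,\Boxed\sqsubset)$ (by part $(a)$ applied to $\sqsubset'$); and every self-embedding $f$ of $\calF$ is order-preserving as a map $(\calF,\Boxed{\sqsubset'}) \hookrightarrow (\calF,\Boxed\sqsubset)$, where $\sqsubset'$ is the pullback of $\sqsubset$ along $f$ (again in the age by part $(a)$). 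That is the paper's route, and both lifting conditions lean on part $(a)$, which is why the initial-segment argument there has to be done correctly first.
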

\begin{proof}
  $(a)$ The inclusion $(\subseteq)$ is obvious, so let us show the inclusion~$(\supseteq)$.
  Take any $(\calA, \Boxed\prec) \in \KK$, let $A = \{a_0, a_1, \dots\}$ where $a_0 \prec a_1 \prec \dots$
  (note that $A$ may be finite or countably infinite), and let $F = \{x_0, x_1, \dots\}$ where
  $x_0 \sqsubset x_1 \sqsubset \dots$. We shall now construct a sequence of embeddings $f_0, f_1, \dots$
  where $f_i : \restr{(\calA, \Boxed\prec)}{\{a_0, \dots, a_i\}} \hookrightarrow (\calF, \Boxed\sqsubset)$
  such that $f_0 \subseteq f_1 \subseteq \dots$. Then
  $f = \UNION_{i \ge 0} f_i$ will clearly be an embedding $(\calA, \Boxed\prec) \hookrightarrow (\calF, \Boxed\sqsubset)$.
  
  Take any embedding $f_0 : \restr{\calA}{\{a_0\}} \hookrightarrow \calF$. Then $f_0$ trivially embeds
  $\restr{(\calA, \Boxed\prec)}{\{a_0\}}$ into $(\calF, \Boxed\sqsubset)$.
  Assume that $f_j : \restr{(\calA, \Boxed\prec)}{\{a_0, \dots, a_j\}} \hookrightarrow (\calF, \Boxed\sqsubset)$
  has been constructed and let us construct $f_{j + 1}$. Let $f_j(a_0) = x_{i_0}$, \dots, $f_j(a_j) = x_{i_j}$,
  and let $\calB = \restr{\calF}{\{x_0, x_1, \ldots, x_{i_j}\}}$. (Note that $\{x_0, x_1, \ldots, x_{i_j}\}$ is an initial segment
  of $(F, \Boxed\sqsubset)$.) Then
  $$
    \XYMATRIX{
      \restr{\calA}{\{a_0, \dots, a_j, a_{j + 1}\}} \\
      \restr{\calA}{\{a_0, \dots, a_j\}} \aremb[u]^-\le \aremb[rr]_-{f_j} & & \calB
    }
  $$
  so by the strong amalgamation property there exist a $\calC \in \Age(\calF)$ and embeddings
  $g : \calB \hookrightarrow \calC$ and $h : \restr{\calA}{\{a_0, \dots, a_j, a_{j + 1}\}} \hookrightarrow \calC$ such that
  $$
    \XYMATRIX{
      \restr{\calA}{\{a_0, \dots, a_j, a_{j + 1}\}} \aremb[rr]^-{h} & & \calC\\
      \restr{\calA}{\{a_0, \dots, a_j\}} \aremb[u]^-\le \aremb[rr]_-{f_j} & & \calB \aremb[u]_-g
    }
  $$
  By the strong amalgamation property,
  $
    h(a_{j+1}) \notin g(B) = g(\{x_0, x_1, \dots, x_{i_j}\})
  $.
  We have that $\calB \le \calF$ by construction, so the fact that
  $\calF$ is ultrahomogeneous (its age has the amalgamation property) yields that there is an embedding
  $e : \calC \hookrightarrow \calF$ such that
  $$
    \XYMATRIX{
      \restr{\calA}{\{a_0, \dots, a_j, a_{j + 1}\}} \aremb[rr]^-{h} & & \calC \aremb[dr]^-e\\
      \restr{\calA}{\{a_0, \dots, a_j\}} \aremb[u]^-\le \aremb[rr]_-{f_j} & & \calB \aremb[u]_-g \aremb[r]_-\le & \calF
    }
  $$
  Put $f_{j+1} = e \circ h$ and $x_{i_{j+1}} = f_{j + 1}(a_{j + 1})$.
  Clearly $f_j \subseteq f_{j+1}$ and $x_{i_{j+1}} \notin \{x_0, x_1, \dots, x_{i_j}\}$. Since
  $\{x_0, x_1, \dots, x_{i_j}\}$ is an initial segment of $(F, \sqsubset)$, it follows that
  $x_{i_j} \sqsubset x_{i_{j+1}}$. Therefore, $f_{j+1} : \restr{(\calA, \Boxed\prec)}{\{a_0, \dots, a_j, a_{j+1}\}}
  \hookrightarrow (\calF, \Boxed\sqsubset)$.
  
  \medskip
  
  $(b)$
  Define $U : \AGE(\calF, \Boxed\sqsubset) \to \AGE(\calF)$ by $U(\calA, \Boxed\prec) = \calA$ on objects and $U(f) = f$ on morphisms.
  This is a forgetful functor, so it suffices to show that the requirements of Theorem~\ref{bigrd.thm.forgetful} are satisfied.
  Clearly, $U(\calF, \Boxed\sqsubset) = \calF$. Assume, now, that $(\calF, \Boxed{\sqsubset'}) \in \AGE(\calF, \Boxed\sqsubset)$.
  Then $\sqsubset'$ has order type $\omega$, so by statement $(a)$ with $\sqsubset'$ in place of $\sqsubset$ we get that
  $(\calF, \Boxed\sqsubset) \in \AGE(\calF, \Boxed{\sqsubset'})$, or, in other words, there
  is an embedding $(\calF, \Boxed\sqsubset) \hookrightarrow (\calF, \Boxed{\sqsubset'})$. Finally, take any $f : \calF \hookrightarrow \calF$
  and define $\sqsubset'$ on $F$ by $x \mathrel{\sqsubset'} y$ iff $f(x) \sqsubset f(y)$. Then
  $f : (\calF, \Boxed{\sqsubset'}) \hookrightarrow (\calF, \Boxed\sqsubset)$. This completes the proof.
\end{proof}

A \emph{linearly ordered structure $(\calA, \Boxed\sqsubset)$} is a structure $\calA$ together with
a linear order~$\sqsubset$ on~$A$.

\begin{COR}\label{bigrd.cor.univ-structs}
  \begin{enumerate}\renewcommand{\labelenumi}{(\theenumi)}
  \item\label{bigrd.cor.item.randomperm}
    Every finite permutation has finite big Ramsey degree in the permutation
    $(\QQ, \Boxed<, \Boxed\sqsubset)$, where $<$ is the usual ordering of the rationals and
    $\sqsubset$ is a linear order on $\QQ$ of order type~$\omega$.
  \item\label{bigrd.cor.item.randomgraph}
    Every finite linearly ordered graph has finite big Ramsey degree in $(\calR, \Boxed\sqsubset)$, where $\calR$ is the
    random graph and $\sqsubset$ is a linear order on $R$ of order type~$\omega$.
    (This result is implicit in~\cite{Sauer-2006}.)
  \item
    Analogously, every finite linearly ordered triangle-free graph, oriented graph, tournament, $k$-edge colored graph
    and complete $k$-edge colored graph has finite big Ramsey degree in $(\calH_3, \Boxed\sqsubset)$,
    $(\calO, \Boxed\sqsubset)$, $(\calT, \Boxed\sqsubset)$, $(\calE_k, \Boxed\sqsubset)$, $(\calK_k, \Boxed\sqsubset)$,
    respectively, where in each case $\sqsubset$ is a linear order of order type~$\omega$.
  \item
    Let $\Theta$ be a finite set consisting of binary relational symbols.
    Every finite linearly ordered $\Theta$-structure has finite big Ramsey degree in $(\calS_\Theta, \Boxed\sqsubset)$
    where $\sqsubset$ is a linear order on $S_\Theta$ of order type~$\omega$.
  \item
    For each at most countable $S$
    every finite linearly ordered $S$-ultrametric space has finite big Ramsey degree in $(\calY_S, \Boxed\sqsubset)$ where
    $\sqsubset$ is a linear order on $Y_S$ of order type~$\omega$.
  \end{enumerate}
\end{COR}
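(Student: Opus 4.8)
The plan is to obtain each of the five items as a direct instance of Theorem~\ref{digrd.thm.UNIV-SAP}: for the ordered structure in question I take $\calF$ to be the appropriate \Fraisse\ limit, read off the age of $(\calF,\Boxed\sqsubset)$ from part~$(a)$, and then invoke part~$(b)$ with the already-recorded value of $T(\calA,\calF)$. For each item this reduces to checking exactly two things: that the underlying \Fraisse\ age has the strong amalgamation property, so that Theorem~\ref{digrd.thm.UNIV-SAP} is applicable at all, and that every finite member of that age has finite big Ramsey degree in the limit, so that the resulting bound $T((\calA,\Boxed\prec),(\calF,\Boxed\sqsubset))\le T(\calA,\calF)$ is finite.

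The strong amalgamation checks are routine and uniform. For item~(1) I take $\calF=\calQ$, noting that a chain carrying a second linear order is precisely a permutation, and that $\ChEmb^\fin$ amalgamates strongly by interleaving the two extensions with no repetitions. For triangle-free graphs, oriented graphs, $k$-edge colored graphs and finite binary $\Theta$-structures one uses free amalgamation, adding no new tuple across the two copies; for tournaments and complete $k$-edge colored graphs one fills in the missing pairs with one fixed orientation, respectively color; and for $\UltIso_S^\fin$ one invokes the standard fact that finite ultrametric spaces over a fixed spectrum form a strong amalgamation class, the cross distances being dictated by the ultrametric inequality. In every case the two images meet exactly in the shared part, which is what strong amalgamation asks.

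With strong amalgamation secured, the big Ramsey inputs are quoted verbatim: Example~\ref{bigrd.ex.brd-ch} for chains underpins item~(1); Example~\ref{bigrd.ex.brd-gra} supplies the finite degrees in $\calR$ (item~(2)), in $\calO$, $\calT$, $\calE_k$ and $\calK_k$ (the last four structures of item~(3)) and in $\calS_\Theta$ (item~(4)); Example~\ref{bigrd.ex.dobrinen} supplies them in $\calH_3$ (the triangle-free case of item~(3)); and Example~\ref{bigrd.ex.brd-ult} supplies them in $\calY_S$ for a finite spectrum $S$ (item~(5) when $S$ is finite). Part~$(b)$ of Theorem~\ref{digrd.thm.UNIV-SAP} then turns each of these into the finiteness of $T((\calA,\Boxed\prec),(\calF,\Boxed\sqsubset))$, while part~$(a)$ certifies that all the relevant finite ordered structures actually occur in the age of the ordered limit.

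The one case outside this recipe, and the step I expect to be the genuine obstacle, is item~(5) for an \emph{infinite} countable~$S$. Here Example~\ref{bigrd.ex.brd-ult} gives $T(\calA,\calY_S)=\infty$, so Theorem~\ref{digrd.thm.UNIV-SAP}$(b)$ collapses to the vacuous bound $\le\infty$; and one cannot dispose of the surplus distances, since $\calY_S$ realizes every element of $S$ and hence embeds into no $\calY_{S_0}$ with finite $S_0\subseteq S$, so there is no hom-equivalence to a finite-spectrum space to exploit. The order of type~$\omega$ must therefore carry the argument on its own. My plan here would be to set Theorem~\ref{digrd.thm.UNIV-SAP} aside and use the enumeration $x_0\sqsubset x_1\sqsubset\cdots$ of $Y_S$ to present $\calY_S$ as an $\omega$-indexed coding tree, in which the ultrametric distance on any finite subset is recovered from the meet-levels of its nodes. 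Since a fixed finite $(\calA,\Boxed\prec)$ reaches only finitely many levels, a Milliken-style tree Ramsey theorem applied to this tree should bound the number of tree-types of copies of $(\calA,\Boxed\prec)$ and so yield a finite big Ramsey degree; the infinitude of $S$ is then tamed by the canonical tree that the well-order supplies, which is exactly the ingredient missing in the unordered setting.
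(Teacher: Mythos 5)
For items (1)--(4) and for item (5) with \emph{finite} $S$, your route is exactly the paper's: the corollary carries no separate proof precisely because it is the instantiation of Theorem~\ref{digrd.thm.UNIV-SAP} you describe --- verify the strong amalgamation property for the underlying age, let part $(a)$ identify the age of the ordered limit, and feed the finite big Ramsey degrees recorded in Examples~\ref{bigrd.ex.brd-ch}, \ref{bigrd.ex.brd-gra}, \ref{bigrd.ex.dobrinen} and~\ref{bigrd.ex.brd-ult} into the bound of part $(b)$. Your SAP verifications (interleaving for chains, free amalgamation, completion by a fixed orientation or color, ultrametric cross-distances) are all sound.

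The genuine problem is your treatment of item (5) for infinite countable $S$, and it is not that your coding-tree/Milliken sketch is incomplete: no argument can close that case, because for infinite $S$ the statement is false, and in fact it contradicts the paper's own Example~\ref{bigrd.ex.brd-ult}. Consider the one-point $S$-ultrametric space. Its embeddings into $\calY_S$, and likewise the embeddings of the one-point linearly ordered space into $(\calY_S, \Boxed\sqsubset)$, are the same objects, namely the points of $Y_S$; the order contributes nothing on a singleton. By Example~\ref{bigrd.ex.brd-ult}, for every $t$ there exist $k$ and a coloring $\chi : Y_S \to k$ such that $|\chi(w(Y_S))| > t$ for \emph{every} isometric self-embedding $w$ of $\calY_S$. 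Every self-embedding of $(\calY_S, \Boxed\sqsubset)$ is in particular such a $w$, so the same coloring defeats the ordered space as well; hence the one-point ordered ultrametric space has infinite big Ramsey degree in $(\calY_S, \Boxed\sqsubset)$ whenever $S$ is infinite. (This transfer works only because for a one-point structure the ordered and unordered hom-sets coincide; for larger $\calA$ the ordered colorings form a strictly smaller family, which is why no such contradiction arises in items (1)--(4).) So item (5) can only be read with $S$ finite --- the formulation ``for each at most countable $S$'' is an overstatement in the paper --- and for finite $S$ your argument is already complete. You should record this restriction rather than invest in the proposed tree-theoretic rescue, since the well-order of type $\omega$ cannot tame a coloring that is already bad at the level of single points.
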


The class of all acyclic oriented graphs is not a \Fraisse\ class. Nevertheless,
we are going to show that there is a countably infinite acyclic oriented graph $\calD$ with the property that
every finite acyclic oriented graph has finite big Ramsey degree in~$\calD$.
Recall that a finite oriented graph $(V, \Boxed\to)$ \emph{has a cycle} if there exist $x_1$, $x_2$, \dots, $x_n$,
$n \ge 1$, such that
$x_1 \to x_2 \to x_3 \to \dots \to x_{n-1} \to x_n \to x_1$, and it is \emph{acyclic} if it has no
cycles. It is easy to see that a finite oriented graph $(V, \Boxed\to)$
is acyclic if and only if there is a linear order $\prec$ on $V$ which extends $\to$ (that is,
$x \to y \Rightarrow x \prec y$). A countably infinite oriented graph $(V, \Boxed\to)$
is \emph{acyclic of order type~$\omega$}
if there exists a linear order $\prec$ on $V$ of order type~$\omega$ which extends~$\to$.

\begin{THM}\label{bigrd.thm.ogra-triangle-free}
  There exists a countably infinite acyclic oriented graph $\calD$ such that:
  
  $(a)$ $\calD$ is universal for all the finite acyclic oriented graphs, and for all the
  countably infinite acyclic oriented graphs of order type~$\omega$;
  
  $(b)$ every finite acyclic oriented graph has finite big Ramsey degree in $\calD$.
\end{THM}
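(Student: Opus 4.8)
The plan is to realize $\calD$ as the random graph oriented by a well-order of type~$\omega$, and then to transfer finiteness of big Ramsey degrees \emph{from the random graph} by a forgetful functor that forgets the \emph{orientation} (in contrast to Theorem~\ref{digrd.thm.UNIV-SAP}, which forgets a linear order). Concretely, I would take the random graph $\calR = (R, E)$ together with a linear order $\sqsubset$ on $R$ of order type~$\omega$, and define $\calD = (R, \Boxed\to)$ by orienting every edge forward: $x \to y$ iff $\{x,y\} \in E$ and $x \sqsubset y$. As $\Boxed\to \subseteq \Boxed\sqsubset$, the graph $\calD$ is acyclic with topological order $\sqsubset$ of type~$\omega$, and its underlying graph is exactly~$\calR$.

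For part~$(a)$, I would use that a finite (resp.\ countably infinite of type~$\omega$) acyclic oriented graph $\calA$ carries a topological order $\prec$ that is finite (resp.\ of type~$\omega$). Then $(\calA, \Boxed\prec)$ is, via its underlying graph, a finite (resp.\ type-$\omega$) linearly ordered graph. Since $\Age(\calR)$ is the class of finite graphs and has the strong amalgamation property, Theorem~\ref{digrd.thm.UNIV-SAP}$(a)$ applied to $\calF = \calR$ shows that $(\calR, \Boxed\sqsubset)$ is universal for all such linearly ordered graphs; an embedding of linearly ordered graphs preserves edges and order and hence the induced orientations, so it is an embedding $\calA \hookrightarrow \calD$. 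Run with an arbitrary type-$\omega$ order $\prec$ in place of $\sqsubset$, the identical argument shows that \emph{every} acyclic orientation of $\calR$ of order type~$\omega$ is again universal for this class; so all of them, $\calD$ included, are pairwise hom-equivalent.

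For part~$(b)$ the main step is to invoke the forgetful functor theorem (Theorem~\ref{bigrd.thm.forgetful}) with $\BB$ the category of acyclic oriented graphs and embeddings, $\CC = \GraEmb$, $B = \calD$, $C = \calR$, and $U : \AGE_\BB(\calD) \to \AGE_\CC(\calR)$ the functor taking an acyclic oriented graph to its underlying graph and an embedding to itself. Then $U$ is forgetful, lands in $\AGE_\CC(\calR)$ because $\calR$ is universal for countable graphs, and $U(\calD) = \calR$. I would then verify the two remaining hypotheses. If $\calD' \in \AGE_\BB(\calD)$ satisfies $U(\calD') = \calR$, then $\calD'$ is a countably infinite acyclic oriented graph on $\calR$, hence of type~$\omega$ (restrict $\sqsubset$), so $\calD$ and $\calD'$ are hom-equivalent by part~$(a)$ and in particular $\hom_\BB(\calD, \calD') \ne \0$. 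Given $f : \calR \hookrightarrow \calR$, I would pull the orientation of $\calD$ back along $f$, orienting an edge $\{x,y\}$ of $\calR$ from $x$ to $y$ exactly when $f(x) \to f(y)$ in $\calD$; the pullback of $\sqsubset$ is an order of type~$\omega$ (an infinite subset of $\omega$) witnessing that this $\calD'$ is acyclic of type~$\omega$, so $\calD' \in \AGE_\BB(\calD)$, $U(\calD') = \calR$, and $f : \calD' \hookrightarrow \calD$. Theorem~\ref{bigrd.thm.forgetful} then yields $T_\BB(\calA, \calD) \le T_{\GraEmb}(U(\calA), \calR)$, which is finite for every finite acyclic oriented graph $\calA$ by Sauer's theorem (Example~\ref{bigrd.ex.brd-gra}).

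The hard part, I expect, is orienting the functor correctly and discharging its last two hypotheses: Theorem~\ref{bigrd.thm.forgetful} bounds the degree in the \emph{richer} category by that in the poorer one, so one must forget the orientation (the structure $\calD$ carries over $\calR$) rather than a linear order; and the verification then hinges on two facts that both rest on part~$(a)$---that every full acyclic orientation of $\calR$ of type~$\omega$ is itself universal (for the second hypothesis), and that orientations pulled back along self-embeddings of $\calR$ stay acyclic \emph{of order type~$\omega$} (for the third).
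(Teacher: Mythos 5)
Your proposal is correct and takes essentially the same route as the paper's own proof: the identical construction $\calD = (R, \Boxed{E^\calR \sec \Boxed\sqsubset})$, part~$(a)$ via Theorem~\ref{digrd.thm.UNIV-SAP}$(a)$ applied to linearly ordered graphs over $(\calR, \Boxed\sqsubset)$, and part~$(b)$ via the orientation-forgetting functor $U : \AGE(\calD) \to \AGE(\calR)$ with the hypotheses of Theorem~\ref{bigrd.thm.forgetful} discharged by the same two witnesses (re-applying $(a)$ to an arbitrary type-$\omega$ topological order of a re-orientation $\calD'$ of $\calR$, and pulling the orientation back along a self-embedding $f$ of $\calR$), concluding with Sauer's theorem. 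No gaps.
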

\begin{proof}
  Let $\sqsubset$ be a linear order of order type~$\omega$ on the vertex set $R$ of the random graph $\calR = (R, E^\calR)$.
  Let us define a countably infinite acyclic oriented graph $\calD = (D, \Boxed{\to^\calD})$
  of order type~$\omega$ as follows:
  $D = R$ and $\Boxed{\to^\calD} = E^\calR \sec \Boxed\sqsubset$.

  $(a)$
  Let $\calA = (A, \Boxed{\to^\calA})$
  be an acyclic oriented graph, finite or a countably infinite of order type~$\omega$,
  and let $\prec$ be a linear order (of order type $\omega$ in case $\calA$ is countably infinite) which
  extends~$\to^\calA$. Define $E^\calA \subseteq A^2$ by
  $E^\calA = \Boxed{\to^\calA} \union (\Boxed{\to^\calA})^{-1}$.
  Then $(A, E^\calA, \Boxed\prec)$ is a linearly ordered graph, finite or countably infinite of order type~$\omega$,
  so there is an embedding $f : (A, E^\calA, \Boxed\prec) \to (\calR, \Boxed\sqsubset)$ by
  Theorem~\ref{digrd.thm.UNIV-SAP}~$(a)$.
  But then it is easy to see that the same $f$ is an embedding of $\calA$ into~$\calD$ because
  $\Boxed{\to^\calA} = E^\calA \sec \Boxed\prec$.

  \medskip

  $(b)$
  Define $U : \AGE(\calD) \to \AGE(\calR)$ by $U(A, \Boxed{\to^\calA}) = (A, E^\calA)$ on objects and $U(f) = f$ on morphisms, where,
  as above, $E^\calA = \Boxed{\to^\calA} \union (\Boxed{\to^\calA})^{-1}$.
  It is easy to show that if $f : (A, \Boxed{\to^\calA}) \to (B, \Boxed{\to^\calB})$ is an embedding then
  $f : (A, E^\calA) \to (B, E^\calB)$ is also an embedding. Hence, $U$ is a well-defined forgetful functor.
  Let us show that $U$ fulfills the requirements of Theorem~\ref{bigrd.thm.forgetful}.
  Clearly, $U(\calD) = \calR$. Take any $\calD' \in \AGE(\calD)$ such that $U(\calD') = \calR$.
  Since $\calD$ is countably infinite of order type~$\omega$ and $U(\calD') = \calR$ it follows that
  $\calD'$ is also a countably infinite acyclic oriented graph of order type~$\omega$, so there is a linear order
  $\sqsubset'$ of order type~$\omega$ which extends~$\to^{\calD'}$. It follows from $U(\calD') = \calR$ that
  $\Boxed{\to^{\calD'}} = E^\calR \sec \Boxed{\sqsubset'}$, so by $(a)$ with $\sqsubset'$ in place of $\sqsubset$ we get that
  $\calD \in \AGE(\calD')$, or, in other words, that there is an embedding $\calD \hookrightarrow \calD'$. 
  Finally, take any $f : \calR \hookrightarrow \calR$ and define $\sqsubset''$ on $R$ by $x \mathrel{\sqsubset''} y$ iff $f(x) \sqsubset f(y)$.
  Now define $\calD'' = (D, \Boxed{\to^{\calD''}})$ of order type~$\omega$ as follows:
  $D'' = R$ and $\Boxed{\to^{\calD''}} = E^\calR \sec \Boxed{\sqsubset''}$. Then, clearly, $f : \calD'' \hookrightarrow \calD$.
  This completes the proof.
\end{proof}

We shall say that an acyclic oriented graph $(V, \Boxed\to)$ \emph{is triangle-free} if it does not embed
the three-element oriented graph $1 \to 2$, $1 \to 3$, $2 \to 3$. As an immediate consequence of~\cite{dobrinen}
(see Example~\ref{bigrd.ex.dobrinen}) we have the following:

\begin{THM}
  There exists a countably infinite acyclic oriented graph $\calD_3$ such that:

  $(a)$ $\calD_3$ is universal for all the finite acyclic triangle-free oriented graphs, and for all the
  countably infinite acyclic triangle-free oriented graphs of order type~$\omega$;
  
  $(b)$ every finite acyclic triangle-free oriented graph has finite big Ramsey degree in $\calD_3$.
\end{THM}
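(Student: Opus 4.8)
The plan is to repeat the proof of Theorem~\ref{bigrd.thm.ogra-triangle-free} almost verbatim, with the Henson graph $\calH_3 = (H_3, E^{\calH_3})$ playing the role that the random graph $\calR$ played there, and with Dobrinen's theorem (Example~\ref{bigrd.ex.dobrinen}) supplying the finiteness of big Ramsey degrees in place of Sauer's result for $\calR$. Concretely, I would fix a linear order $\sqsubset$ of order type~$\omega$ on $H_3$ and define $\calD_3 = (D_3, \Boxed{\to^{\calD_3}})$ by $D_3 = H_3$ and $\Boxed{\to^{\calD_3}} = E^{\calH_3} \sec \Boxed\sqsubset$, i.e.\ orient each edge of the Henson graph according to~$\sqsubset$. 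Since $\Boxed{\to^{\calD_3}} \subseteq \Boxed\sqsubset$, the order $\sqsubset$ extends $\to^{\calD_3}$, so $\calD_3$ is acyclic of order type~$\omega$.

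The one combinatorial point to record is that an acyclic oriented graph $(V, \Boxed\to)$ is triangle-free in the sense of the paper (it does not embed $1 \to 2$, $1 \to 3$, $2 \to 3$) \emph{if and only if} its underlying undirected graph $(V, E)$, where $E = \Boxed\to \union (\Boxed\to)^{-1}$, is triangle-free: the only acyclic orientation of an undirected triangle is the transitive one, which is exactly the forbidden pattern. In particular $\calD_3$ is triangle-free because $\calH_3$ is. For part~$(a)$, given a finite (or countably infinite of order type~$\omega$) acyclic triangle-free oriented graph $\calA = (A, \Boxed{\to^\calA})$, this observation says the underlying graph $(A, E^\calA)$ is triangle-free; choosing a linear order $\prec$ extending $\to^\calA$ (of order type~$\omega$ if $\calA$ is infinite), the structure $(A, E^\calA, \Boxed\prec)$ is a linearly ordered triangle-free graph. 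As $\Age(\calH_3) = \GraEmb_3^\fin$ has the strong amalgamation property (free amalgamation of triangle-free graphs creates no new triangle), Theorem~\ref{digrd.thm.UNIV-SAP}~$(a)$ applied with $\calF = \calH_3$ yields an embedding $f : (A, E^\calA, \Boxed\prec) \hookrightarrow (\calH_3, \Boxed\sqsubset)$, and the same $f$ embeds $\calA$ into $\calD_3$ because $\Boxed{\to^\calA} = E^\calA \sec \Boxed\prec$.

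For part~$(b)$ I would define the forgetful functor $U : \AGE(\calD_3) \to \AGE(\calH_3)$ by $U(A, \Boxed{\to^\calA}) = (A, E^\calA)$ on objects and $U(f) = f$ on morphisms, and check the three hypotheses of Theorem~\ref{bigrd.thm.forgetful} exactly as in the proof of Theorem~\ref{bigrd.thm.ogra-triangle-free}~$(b)$: the identity $U(\calD_3) = \calH_3$ is immediate; if $U(\calD_3') = \calH_3$ then $\calD_3'$ is a countably infinite acyclic triangle-free oriented graph of order type~$\omega$, so part~$(a)$ gives an embedding $\calD_3 \hookrightarrow \calD_3'$; and for $f : \calH_3 \hookrightarrow \calH_3$ one pulls the order back along $f$ (set $x \mathrel{\sqsubset''} y$ iff $f(x) \sqsubset f(y)$ and $\calD_3'' = (H_3, E^{\calH_3} \sec \Boxed{\sqsubset''})$) to manufacture a $\calD_3''$ with $U(\calD_3'') = \calH_3$ and $f : \calD_3'' \hookrightarrow \calD_3$. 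Theorem~\ref{bigrd.thm.forgetful} then gives $T(\calA, \calD_3) \le T\big((A, E^\calA), \calH_3\big)$, which is finite by Dobrinen's theorem.

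Since the theorem is advertised as an immediate consequence of Dobrinen's result, there is no genuine obstacle: all the difficult Ramsey-theoretic content is outsourced to Example~\ref{bigrd.ex.dobrinen}, and the work here is structural. The step requiring the most care is the triangle-free bookkeeping above --- verifying that the functor $U$ actually lands in $\AGE(\calH_3)$ (equivalently, that forgetting the orientation neither destroys nor creates the forbidden triangle), and that the class of triangle-free graphs genuinely admits strong amalgamation so that Theorem~\ref{digrd.thm.UNIV-SAP} is applicable.
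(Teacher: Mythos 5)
Your proposal is correct and is precisely the paper's proof: the paper disposes of this theorem with the single sentence ``Analogous to the proof of Theorem~\ref{bigrd.thm.ogra-triangle-free}; just take $\calH_3$ instead of $\calR$,'' and you have carried out exactly that substitution, including the two details the analogy silently requires (that an acyclic oriented graph omits the transitive triangle if and only if its underlying graph is triangle-free, and that $\GraEmb_3^\fin$ has the strong amalgamation property via free amalgamation, so Theorem~\ref{digrd.thm.UNIV-SAP} applies to $\calH_3$).
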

\begin{proof}
  Analogous to the proof of Theorem~\ref{bigrd.thm.ogra-triangle-free}; just take $\calH_3$ instead of $\calR$.
\end{proof}

As a final contribution of this section we consider a special kind of posets.
A \emph{linearly ordered poset} is a structure 
$(A, \Boxed\preccurlyeq, \Boxed\sqsubset)$ where $(A, \Boxed\preccurlyeq)$ is a poset and
$\sqsubset$ is a linear order on $A$ which extends $\preccurlyeq$ in the following sense: if $x \prec y$ then $x \sqsubset y$.
A countably infinite linearly ordered poset $(A, \Boxed\preccurlyeq, \Boxed\sqsubset)$ \emph{is of order type $\omega$}
if $(A, \Boxed{\sqsubset})$ is a linear order of order type~$\omega$.

Let $I^\ast$ denote the three-element linearly ordered poset $(\{0, p, 1\}, \Boxed\preccurlyeq, \Boxed\sqsubset)$ where
$0 \sqsubset p \sqsubset 1$, $0 \prec 1$, and both 0 and 1 are incomparable with~$p$ with respect to~$\preccurlyeq$.

\begin{THM}
  Let $\KK$ be the class of all the finite and countably infinite linearly ordered posets of order type $\omega$ which
  do not embed $I^\ast$. Then there exists a countably infinite linearly ordered poset $\calQ \in \KK$ such that
  
  $(a)$ $\calQ$ is universal for $\KK$; and

  $(b)$ every finite linearly ordered poset from $\KK$ has finite big Ramsey degree in $\calQ$.
\end{THM}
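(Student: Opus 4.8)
The plan is to recognize $\KK$ --- viewed as a category whose objects are its members and whose morphisms are the embeddings --- as an isomorphic copy of a category of permutations, and then to transport both statements from Corollary~\ref{bigrd.cor.univ-structs}(\ref{bigrd.cor.item.randomperm}) by means of Lemma~\ref{bigrd.lem.iso-cat}. The bridge is the following dictionary. Given $\calA=(A,\Boxed\prec,\Boxed\sqsubset)\in\KK$, define an auxiliary binary relation $\ltt$ on $A$ by declaring, for $x\sqsubset y$, that $x\ltt y$ if $x\prec y$, and $y\ltt x$ if $x$ and $y$ are $\prec$-incomparable; thus $\ltt$ agrees with $\sqsubset$ on comparable pairs and reverses it on incomparable ones. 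Then $\ltt$ is a tournament, so it is a linear order if and only if it has no directed $3$-cycle, and one has $\prec={\sqsubset}\cap{\ltt}$.

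First I would prove the key equivalence: for a linearly ordered poset, $\ltt$ is a linear order if and only if $I^\ast$ does not embed. Writing any three points in $\sqsubset$-increasing order $x\sqsubset y\sqsubset z$, a $3$-cycle of $\ltt$ on $\{x,y,z\}$ occurs exactly in one of two situations: either $x,y$ and $y,z$ are incomparable while $x\prec z$, which is precisely a copy of $I^\ast$; or $x\prec y\prec z$ while $x,z$ are incomparable, which cannot happen since $\prec$ is transitive. Hence, among posets, omitting $I^\ast$ is equivalent to $\ltt$ being a linear order. This makes $F(\calA)=(A,\Boxed\ltt,\Boxed\sqsubset)$, $F(f)=f$, a functor from $\KK$ into permutations; conversely a permutation $(A,\Boxed{<_1},\Boxed{<_2})$ yields a linearly ordered poset with $\prec={<_1}\cap{<_2}$ and $\sqsubset={<_2}$, which lies in $\KK$ exactly when $<_2$ has order type $\omega$. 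Since $\ltt$ is definable from $(\prec,\sqsubset)$ and vice versa, an injection preserves and reflects $\prec,\sqsubset$ if and only if it preserves and reflects the two orders of the corresponding permutation; thus $F$ is bijective on objects and on hom-sets, i.e.\ an isomorphism of $\KK$ onto the category $\calP$ of all finite and countably infinite permutations whose second order has type~$\omega$.

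Next I would locate the universal object. Applying Theorem~\ref{digrd.thm.UNIV-SAP} to the rational chain $(\QQ,\Boxed<)$, whose age has the strong amalgamation property, produces the permutation $\calV=(\QQ,\Boxed<,\Boxed\sqsubset)$ with $\sqsubset$ of type~$\omega$; part~$(a)$ gives $\AGE_{\PermEmb}(\calV)=\calP$ and Corollary~\ref{bigrd.cor.univ-structs}(\ref{bigrd.cor.item.randomperm}) gives that every finite permutation has finite big Ramsey degree in $\calV$. Now set $\calQ=(\QQ,\Boxed\prec,\Boxed\sqsubset)$ with $\prec={<}\cap{\sqsubset}$ and the same $\sqsubset$. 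A direct check shows $\calQ\in\KK$ and that the auxiliary order built from $\calQ$ is exactly $<$, so $F(\calQ)=\calV$. For $(a)$, any $\calA\in\KK$ gives an embedding $F(\calA)\hookrightarrow\calV=F(\calQ)$ (because $F(\calA)\in\calP=\AGE_{\PermEmb}(\calV)$), which pulls back along $F^{-1}$ to an embedding $\calA\hookrightarrow\calQ$. For $(b)$, since $\calP=\AGE_{\PermEmb}(\calV)$ is a full subcategory of $\PermEmb$ containing $\calV$, Lemma~\ref{bigrd.lem.iso-cat} gives $T_\KK(\calA,\calQ)=T_{\calP}(F(\calA),\calV)=T_{\PermEmb}(F(\calA),\calV)$, which is finite whenever $\calA$ is finite.

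I expect the single genuinely delicate point to be the equivalence in the second paragraph --- the verification that, within the class of posets, the lone forbidden configuration $I^\ast$ is exactly what is needed for the derived tournament $\ltt$ to be transitive, so that $\KK$ is categorically nothing but a class of permutations. Once this dictionary is in place, parts $(a)$ and $(b)$ are formal transfers through $F$ using the already-established permutation case.
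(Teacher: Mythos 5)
Your proposal is correct and follows essentially the same route as the paper: $\KK$ is identified, as a category, with a category of permutations, the universal object is the very same $\calQ=(\QQ,\Boxed\preccurlyeq,\Boxed\sqsubset)$ with $\Boxed\preccurlyeq=\Boxed\le\cap\Boxed\sqsubseteq$ coming from Theorem~\ref{digrd.thm.UNIV-SAP} applied to $(\QQ,\Boxed<)$, and both parts are transferred through Lemma~\ref{bigrd.lem.iso-cat} and Corollary~\ref{bigrd.cor.univ-structs}(\ref{bigrd.cor.item.randomperm}). The two points where you diverge are both to your credit: the paper outsources the key equivalence (a linearly ordered poset is permutational if and only if it omits $I^\ast$) to the citation \cite{dol-mas-lo-posets}, whereas your $3$-cycle analysis of the derived tournament $\ltt$ proves it from scratch; and where the paper asserts, a bit too quickly, that $F:\PermEmb\to\KK$ itself is an isomorphism (it is not, since $\PermEmb$ contains countably infinite permutations whose second order does not have type $\omega$, and their images fall outside $\KK$), you correctly restrict to the full subcategory $\calP$ of permutations whose second order is finite or of type $\omega$, and you justify the remaining step by observing that fullness makes the big Ramsey degrees computed in $\calP$ and in $\PermEmb$ coincide.
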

\begin{proof}
  A linearly ordered poset $(A, \Boxed\preccurlyeq, \Boxed\sqsubset)$ is \emph{permutational}
  if there exists a linear order $\sqsubset'$ on $A$ such that $\Boxed\preccurlyeq = \Boxed\sqsubseteq \sec \Boxed{\sqsubseteq'}$.
  It was shown in~\cite{dol-mas-lo-posets} that a linearly ordered poset $\calA$ is permutational if and only
  if $I^\ast \not\hookrightarrow \calA$.

  Let $\sqsubset$ be a linear order of order type~$\omega$ on $\QQ$, let
  $\Boxed\preccurlyeq = \Boxed\le \sec \Boxed\sqsubseteq$ where $<$ is the usual ordering of~$\QQ$,
  and let $\calQ = (\QQ, \Boxed\preccurlyeq, \Boxed\sqsubset)$. Then $\calQ$ is (obviously) a linearly ordered permutational poset
  of order type~$\omega$, that is, $\calQ \in \KK$.

  Let us consider $\KK$ as a category whose morphisms are embeddings.
  It is easy to see that the functor $F : \PermEmb \to \KK$ given by
  $$
    F(A, \Boxed{\sqsubset_1}, \Boxed{\sqsubset_2}) = (A, \Boxed{\sqsubseteq_1} \sec \Boxed{\sqsubseteq_2}, \Boxed{\sqsubset_2})
    \text{\quad and\quad} F(f) = f
  $$
  is an isomorphism such that $F(\QQ, \Boxed<, \Boxed\sqsubset) = \calQ$.
  The claim now follows from Corollary~\ref{bigrd.cor.univ-structs}~(\ref{bigrd.cor.item.randomperm})
  and Lemma~\ref{bigrd.lem.iso-cat}.
\end{proof}

\section{A special class of metric spaces}
\label{bigrd.sec.metspc}

It is a well-known fact that $\MetIso_S^\fin$ is not a \Fraisse\ age for every set $S$ of nonnegative reals.
A detailed analysis of this phenomenon can be found in~\cite{dlps-2007} and~\cite{Sauer-2013} and we shall now
outline a few key points from these two papers.

A \emph{metric triple} is a triple $(a, b, c)$ of positive reals such that $a + b \ge c$, $b + c \ge a$ and $c + a \ge b$.
A set $S$ of nonnegative reals satisfies the \emph{4-values condition}~\cite{dlps-2007} if the following holds for
all $a, b, c, d \in S \setminus \{0\}$: if there is a $p \in S \setminus \{0\}$ such that $(a, b, p)$ and $(c, d, p)$
are metric triples, then there is a $q \in S \setminus \{0\}$ such that $(a, c, q)$ and $(b, d, q)$
are metric triples. In case $S$ is a finite set of nonnegative reals such that $0 \in S$,
as a consequence of one of the main results of~\cite{Sauer-2013} we
we have the following: $\MetIso_S^\fin$ is a \Fraisse\ age (in the ``unrestricted'' sense)
if and only if $S$ satisfies the 4-values condition.

Let $S = \{ 0 = s_0 < s_1 < \ldots < s_n \}$ be a finite set of nonnegative reals. We say that $s_i$ is a
\emph{jump number in $S$}~\cite{Sauer-2013} if $i = n$, or $i < n$ and $2 s_i < s_{i+1}$. Therefore,
$s_0 = 0$ and $s_n$ are always jump numbers, and there may be others.
If $s_i$ and $s_j$ are two consecutive jump numbers, then $S \sec (s_i, s_j] =
\{s_{i+1}, \ldots, s_j\}$ is a \emph{block} of~$S$. We also take $\{0\}$ to be a block of $S$
and call it the \emph{trivial block of $S$}. Note that every block contains exactly one jump number and it is the
largest element of the block. We can, therefore, partition $S$ into blocks as follows:
$$
  S = \{0\} \union B_1 \union \ldots \union B_k
$$
where we always assume that the blocks $B_1$, \dots, $B_k$ are enumerated so that every element of $B_i$ is smaller then
every element of $B_{i+1}$. This partition induces an equivalence relation $\approx$ on~$S$ where
$x \approx y$ iff $x$ and $y$ belong to same block of~$S$. Moreover, for $x, y \in S \setminus \{0\}$ we write
$x \ll y$ to denote that $x \in B_i$ and $y \in B_j$ for some $i < j$.

\begin{LEM}
  Let $S = \{ 0 = s_0 < s_1 < \ldots < s_n \}$ be a finite set of nonnegative reals and let $x, y \in S \setminus \{0\}$.
  If $|x - y| \le s_1$ then $x \approx y$.
\end{LEM}
\begin{proof}
  Let $S = \{0\} \union B_1 \union \ldots \union B_k$ be the partition of $S$ into blocks.
  Without loss of generality we can assume that $x > y > 0$, and let $y \in B_i$ for some $i > 0$.
  Assume, now, that $x - y \le s_1$. Then $x \le y + s_1 \le 2y \le 2 \max(B_i)$, whence follows that
  $x \notin B_j$ for $j > i$.
  On the other hand,
  $x > y \ge \min(B_i)$. Therefore, $x \in B_i \ni y$.
\end{proof}

The converse of this lemma need not be true, so we introduce the following notion.

\begin{DEF}
  A finite set $S = \{ 0 = s_0 < s_1 < \ldots < s_n \}$ of nonnegative reals
  is a \emph{compact distance set} if the following holds for all $x, y \in S \setminus \{0\}$:
  $$
    |x - y| \le s_1 \text{ if and only if } x \approx y.
  $$
\end{DEF}

\begin{LEM}\label{bigrd.lem.equi-isocel}
  Let $S = \{ 0 = s_0 < s_1 < \ldots < s_n \}$ be a compact distance set.
  Take any $a, b, c \in S \setminus \{0\}$ and assume that $a \le b \le c$. Then $(a, b, c)$
  is a metric triple if and only if $a \approx b \approx c$ or
  $a \ll b \approx c$.
\end{LEM}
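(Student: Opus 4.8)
The statement characterizes exactly when a triple $(a, b, c)$ with $a \le b \le c$ drawn from a compact distance set $S$ is a metric triple. Since $a \le b \le c$, the only nontrivial triangle inequality is $a + b \ge c$ (the other two are automatic from the ordering, as $b + c \ge b \ge a$ and $c + a \ge c \ge b$). So the whole problem reduces to understanding when $a + b \ge c$ holds in terms of the block structure of $S$. The asserted answer is that this happens precisely in two regimes: either all three lie in the same block ($a \approx b \approx c$), or the smallest is strictly below a common block containing the two larger ones ($a \ll b \approx c$).

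\textbf{Plan of proof.} First I would reduce, as above, to the single inequality $a + b \ge c$, noting the other two hold trivially. The plan is then to prove both directions by exploiting the defining property of a compact distance set together with the jump-number/block structure. For the $(\Leftarrow)$ direction I would argue: if $a \approx b \approx c$ then in particular $b \approx c$, so by the compact-distance-set property $|b - c| = c - b \le s_1$; combined with $a \ge s_1$ (since $a \in S \setminus \{0\}$ means $a \ge s_1$), this gives $a + b \ge s_1 + b \ge c$. The case $a \ll b \approx c$ is handled the same way on the pair $(b,c)$: again $c - b \le s_1 \le a$, so $a + b \ge c$. Thus in both allowed configurations the metric inequality holds.

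\textbf{The harder direction.} For $(\Rightarrow)$ I would assume $(a,b,c)$ is a metric triple, i.e. $a + b \ge c$, and rule out the remaining block-configurations. The cases to eliminate are: (i) $b \ll c$ (the two largest lie in different blocks); and, when $a,b,c$ are not all equivalent, I must also show the placement of $a$ is forced into one of the two admissible patterns. The key obstacle — and the step to argue carefully — is case (i): if $b \ll c$, then $b$ and $c$ lie in different blocks, so by the compact-distance-set property $c - b > s_1$, hence $c > b + s_1 \ge b + a$ (using $a \le s_1$ would be wrong, so instead I use $a \le b$ and the jump structure directly). The clean way is: $b \ll c$ means there is a jump number $s_i$ with $b \le s_i$ and $c$ in a strictly higher block, so $c > 2 s_i \ge 2b \ge a + b$ by the definition of a jump number, contradicting $a + b \ge c$. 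This handles $b \ll c$. Once $b \approx c$ is forced, the remaining possibilities for $a$ relative to $b$ are $a \approx b$ (giving $a \approx b \approx c$) or $a \ll b$ (giving $a \ll b \approx c$); since $a \le b$, the only other alternative $b \ll a$ is impossible, so these are exhaustive.

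\textbf{Summary of structure.} So the proof is a short case analysis: reduce to $a + b \ge c$; prove sufficiency by bounding $c - b \le s_1 \le a$ in both admissible cases; prove necessity by showing $b \ll c$ forces $c > 2b \ge a + b$ via the jump-number inequality $2 s_i < s_{i+1}$, and then observing that $b \approx c$ together with $a \le b$ leaves exactly the two listed configurations. The main subtlety to watch is keeping the compact-distance-set equivalence ($|x-y| \le s_1 \Leftrightarrow x \approx y$) and the jump-number inequality cleanly separated, and making sure the strict inequality in $2 s_i < s_{i+1}$ is what delivers the contradiction in the necessity argument.
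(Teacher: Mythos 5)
Your proof is correct and follows essentially the same route as the paper's: sufficiency via the estimate $c - b \le s_1 \le a$ coming from the compact-distance-set property, and necessity via the jump-number inequality $a + b \le 2\max(B_i) < \min(B_j) \le c$. Your only organizational difference --- collapsing the two forbidden configurations into the single condition $b \ll c$ (which, given $a \le b \le c$, is exactly the complement of the two admissible patterns) --- is a harmless streamlining of the paper's case split.
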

\begin{proof}
  Let $S = \{0\} \union B_1 \union \ldots \union B_k$ be the partition of $S$ into blocks.

  $(\Leftarrow)$
  Let us only consider the case $a \ll b \approx c$.
  Then $a \in B_i$ and $b, c \in B_j$ for some $j > i > 0$.
  To show that $c \le a + b$ it suffices to note that $S$ is a compact distance set,
  so $c - b \le s_1 \le a$. The remaining cases follow by analogous reasoning.

  $(\Rightarrow)$
  Suppose, to the contrary, that $a$, $b$ and $c$ belong to distinct blocks of $S$, or that $a, b \in B_i$ and $c \in B_j$ for some $i < j$.
  Let us only consider the latter case. Then $a + b \le 2 \max(B_i) < \min(B_j) \le c$, whence $(a, b, c)$ is not a metric triple.
\end{proof}

\begin{LEM}\label{bigrd.lem.fraisse-age}
  Let $S = \{ 0 = s_0 < s_1 < \ldots < s_n \}$ be a compact distance set. Then $S$ satisfies the 4-values property.
  Consequently, $\MetIso_S^\fin$ is a \Fraisse\ age with respect to $\MetIso_S$.
\end{LEM}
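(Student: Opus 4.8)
The plan is to translate the entire 4-values condition into an elementary statement about the \emph{block indices} of the distances involved, and then to verify that statement by a short case analysis. Throughout, write $i_x$ for the index of the block of $S$ containing a distance $x \in S \setminus \{0\}$, so that $x \approx y$ means $i_x = i_y$ and $x \ll y$ means $i_x < i_y$.

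First I would record the key reformulation supplied by Lemma~\ref{bigrd.lem.equi-isocel}: for nonzero $x, y, z \in S$ the triple $(x, y, z)$ is a metric triple if and only if $\max(i_x, i_y, i_z)$ is attained by at least two of the three indices (equivalently, the two largest of $x, y, z$ lie in the same block). Indeed, sorting the values and applying the lemma shows that a metric triple always has its two largest entries in one block, and conversely any triple whose two largest entries share a block falls under one of the admissible patterns $a \approx b \approx c$ or $a \ll b \approx c$. Consequently the metric-triple relation among nonzero distances depends only on block indices, and the 4-values condition for $S$ becomes the following purely combinatorial assertion about indices in $\{1, \dots, k\}$: given $\alpha, \beta, \gamma, \delta \in \{1, \dots, k\}$ for which there exists $\pi$ with $\max(\alpha, \beta, \pi)$ and $\max(\gamma, \delta, \pi)$ each attained at least twice, one must produce $\rho \in \{1, \dots, k\}$ with $\max(\alpha, \gamma, \rho)$ and $\max(\beta, \delta, \rho)$ each attained at least twice. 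The passage between distances and indices is legitimate because every block is nonempty, so any desired index $\rho$ is realized by an actual distance that can serve as the witness $q$.

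Next I would analyse the hypotheses. Because $\max(\alpha, \beta, \pi)$ is attained twice, one has $\pi \le \max(\alpha, \beta)$ together with \emph{``$\pi = \max(\alpha, \beta)$ or $\alpha = \beta$''}, and symmetrically for $\max(\gamma, \delta, \pi)$. The conclusion is then established by splitting on the coincidences among the goal pairs. If $\alpha = \gamma$ and $\beta = \delta$, then $\rho = 1$ works trivially. If $\alpha = \gamma$ but $\beta \ne \delta$, I would show $\max(\beta, \delta) \le \alpha$ and take $\rho = \max(\beta, \delta)$: assuming, say, $\beta > \alpha$ forces $\pi = \beta$ from the first hypothesis, and then the second hypothesis forces $\pi = \delta$, giving $\beta = \delta$, a contradiction; the case $\delta > \alpha$ is symmetric. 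The case $\alpha \ne \gamma$, $\beta = \delta$ is the mirror image under the symmetry $\alpha \leftrightarrow \beta$, $\gamma \leftrightarrow \delta$, which fixes both hypotheses and merely swaps the two goal conditions.

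The main obstacle is the remaining case $\alpha \ne \gamma$ and $\beta \ne \delta$, where I must prove the exact equality $\max(\alpha, \gamma) = \max(\beta, \delta)$ and take $\rho$ to be this common value. Here I would run through the four sub-cases dictated by the two dichotomies ``$\pi = \max(\alpha,\beta)$ or $\alpha = \beta$'' and ``$\pi = \max(\gamma,\delta)$ or $\gamma = \delta$'': when $\alpha = \beta$ and $\gamma = \delta$ the equality is immediate; in the two mixed sub-cases the relation $\pi \le \max$ on one side combines with $\pi = \max$ on the other to pin both $\max(\alpha,\gamma)$ and $\max(\beta,\delta)$ to the same value; and in the sub-case $\pi = \max(\alpha,\beta) = \max(\gamma,\delta)$ the constraints $\alpha \ne \gamma$ and $\beta \ne \delta$ are precisely what exclude the configurations in which the two maxima could differ, forcing both to equal $\pi$. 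This exhausts all cases, so $S$ satisfies the 4-values condition; the ``consequently'' clause then follows from the cited characterisation of~\cite{Sauer-2013}, namely that for a finite $S$ with $0 \in S$ the class $\MetIso_S^\fin$ is a \Fraisse\ age exactly when $S$ satisfies the 4-values condition.
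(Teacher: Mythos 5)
Your proof is correct, and it is worth comparing with the paper's, because while both arguments pivot on the same key fact --- Lemma~\ref{bigrd.lem.equi-isocel}, which makes membership in the metric-triple relation depend only on the blocks of the distances involved --- the decompositions are genuinely different. The paper classifies each of $(a,b,p)$ and $(c,d,p)$ as an ``equilateral'' or ``isosceles'' triple and then runs through the block-order patterns of the five distances, explicitly verifying only two of ``several'' cases and leaving the remainder to the reader. You instead restate the lemma as an ultrametric-type condition on block indices (the maximum index must be attained at least twice), note that the 4-values condition thereby becomes a purely combinatorial assertion about indices in $\{1,\dots,k\}$ (nonemptiness of blocks supplying the actual witness $q$), and organise an exhaustive analysis around the coincidences $\alpha=\gamma$ and $\beta=\delta$ rather than around order patterns. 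What the paper's route buys is brevity; what yours buys is completeness and a conceptual explanation: at the level of blocks a compact distance set behaves like an ultrametric spectrum, and the ``max attained twice'' formulation is exactly the ultrametric triangle condition, for which the 4-values property is a finite check. I verified the individual steps --- the equivalence of your reformulation of the hypotheses with ``max attained at least twice,'' the contradiction argument in the case $\alpha=\gamma$, $\beta\neq\delta$ (where $\beta>\alpha$ forces $\pi=\beta$ and then $\pi=\delta$), and the exclusion of the configurations $\alpha=\pi=\gamma$ and $\beta=\pi=\delta$ in the sub-case $\pi=\max(\alpha,\beta)=\max(\gamma,\delta)$ --- and they are all sound, so your case analysis exhausts all configurations.
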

\begin{proof}
  As we have seen at the beginning of the section, it follows from \cite{Sauer-2013} that
  $\MetIso_S^\fin$ is a \Fraisse\ age whenever $S$ (being a finite set) satisfies the 4-values property.
  So, let us show that $S$ satisfies the 4-values property.

  Take any $a, b, c, d \in S \setminus \{0\}$ and assume that there is a $p \in S \setminus \{0\}$ such that
  $(a, b, p)$ and $(c, d, p)$ are metric triples. Having Lemma~\ref{bigrd.lem.equi-isocel} in mind the triple
  $(a, b, p)$ is either an ``equilateral triple'' or an ``isosceles triple'', and the same holds for $(c, d, p)$.
  There are several cases to consider of which we discuss only two.
  If $a \approx b \gg c \approx p \gg d$ take $q = a$,
  and in case $a \approx c \approx p \gg b \gg d$ take $q = b$.
\end{proof}

\begin{THM}\label{bigrd.thm.1-block}
  Let $S = \{0\} \union B_1$ be a compact distance set with only one nontrivial block.
  Then every finite $S$-metric space has finite big Ramsey degree in $\calU_S$, the \Fraisse\ limit
  of $\MetIso_S^\fin$.
\end{THM}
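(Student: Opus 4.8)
The plan is to recognize that under the stated hypotheses the category $\MetIso_S$ collapses onto a category of complete edge-colored graphs, and then to import the known big Ramsey result for the latter through the category-isomorphism lemma (Lemma~\ref{bigrd.lem.iso-cat}). Write $S \setminus \{0\} = B_1 = \{s_1, \dots, s_n\}$. Since there is only one nontrivial block, any two nonzero elements of $S$ satisfy $x \approx y$. Applying Lemma~\ref{bigrd.lem.equi-isocel} (which needs $S$ compact, as assumed), for all $a \le b \le c$ in $S \setminus \{0\}$ the condition $a \approx b \approx c$ holds automatically, so $(a, b, c)$ is always a metric triple. Together with the trivial cases in which one distance is $0$, this shows that \emph{every} symmetric map $d : M^2 \to S$ with $d(x, y) = 0 \Leftrightarrow x = y$ satisfies the triangle inequality; the metric axioms impose no constraint beyond being an $S$-valued symmetric distance vanishing exactly on the diagonal.

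First I would use this to build the category isomorphism. Define $F(M, d) = (M, E_1, \dots, E_n)$ with $E_i = \{(x, y) : d(x, y) = s_i\}$ and $F(f) = f$ on morphisms; this is a functor $F : \MetIso_S \to \KGraEmb_n$. Its evident candidate inverse $G(V, E_1, \dots, E_n) = (V, d)$, where $d(x, y) = s_i$ for $(x, y) \in E_i$ and $d(x, x) = 0$, lands in $\MetIso_S$ \emph{precisely because every triple is a metric triple}; and an injection is an isometric embedding iff it is an embedding of the associated complete edge-colored graphs, since both assert that the distance/color of each pair is preserved in both directions. Hence $F$ is an isomorphism of categories (identity on underlying maps, so functoriality is immediate), and it restricts to a bijection, up to isomorphism, between $\MetIso_S^\fin$ and $\KGraEmb_n^\fin$.

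Next I would check that $F$ carries the \Fraisse\ limit $\calU_S$ to $\calK_n$. Because $F$ is a category isomorphism preserving underlying-set cardinality and preserving and reflecting embeddings, the object $F(\calU_S)$ is countably infinite, is ultrahomogeneous, and has age $F(\MetIso_S^\fin) = \KGraEmb_n^\fin$; by uniqueness of the \Fraisse\ limit this forces $F(\calU_S) \cong \calK_n$. Since $\calU_S$ is universal for $\MetIso_S^\fin$, every finite $S$-metric space $\calA$ lies in $\AGE(\calU_S)$, so Lemma~\ref{bigrd.lem.iso-cat} applies and gives $T_{\MetIso_S}(\calA, \calU_S) = T_{\KGraEmb_n}(F(\calA), \calK_n)$. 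The right-hand side is finite: for $n \ge 2$ this is exactly Sauer's result recorded in Example~\ref{bigrd.ex.brd-gra}, while for $n = 1$ the category $\KGraEmb_1$ is that of sets with injections, for which finiteness is a direct consequence of Ramsey's Theorem (Theorem~\ref{fbrd.thm.inf-Ramsey}).

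The hard part will be the identification $F(\calU_S) \cong \calK_n$: one must verify that the category isomorphism genuinely matches the two \Fraisse\ limits rather than merely relating their finite ages, which amounts to transporting ultrahomogeneity and universality across $F$ and then invoking uniqueness. Everything else is bookkeeping of definitions. The single place where the hypotheses are truly used is the reduction ``every triple is a metric triple,'' which is exactly where compactness and the single-block assumption enter and which makes the inverse functor $G$ well defined; I would therefore present that verification with care and treat the remaining steps as routine.
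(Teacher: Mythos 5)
Your proposal is correct and takes essentially the same route as the paper: both exploit the fact that every triple of nonzero distances is a metric triple (via Lemma~\ref{bigrd.lem.equi-isocel}) to identify $\MetIso_S$ with the category $\KGraEmb_k$ of complete $k$-edge colored graphs, $k = |B_1|$, and then transport Sauer's result from Example~\ref{bigrd.ex.brd-gra} via Lemma~\ref{bigrd.lem.iso-cat}. The only differences are that you spell out the identification $F(\calU_S) \cong \calK_k$ through uniqueness of \Fraisse\ limits and treat the case $k = 1$ separately via Ramsey's Theorem, details which the paper's much terser proof leaves implicit.
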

\begin{proof}
  Lemma~\ref{bigrd.lem.fraisse-age} ensures that $\MetIso_S^\fin$ is a \Fraisse\ age.
  From Lemma~\ref{bigrd.lem.equi-isocel} we know
  that every triple with elements in $S \setminus \{0\}$ is a metric triple. Therefore, the category
  $\MetIso_S^\fin$ is isomorphic to the category $\KGraEmb_k$ where $k = |B_1|$.
  (The isomorphism $F : \MetIso_S^\fin \to \KGraEmb_k$ takes an $S$-metric space and produces a complete
  $k$-edge colored graph simply by labeling each pair of distinct points with their distance; and takes
  maps between metric spaces onto themselves.) Since every finite complete $k$-edge colored graph has
  finite big Ramsey degree in $\calK_k$ (see Example~\ref{bigrd.ex.brd-gra}) it follows straightforwardly that
  every finite $S$-metric space has finite big Ramsey degree in $\calU_S$ (Lemma~\ref{bigrd.lem.iso-cat}).
\end{proof}

At the moment we are not able to prove an analogous statement in case $S$ is a compact distance set with
more than one nontrivial block. Nevertheless, if $S$ has more than one nontrivial block we can take a
different route.

Let $S = \{ 0 = s_0 < s_1 < \ldots < s_n \}$ be a compact distance set
and let $S = \{0\} \union B_1 \union \ldots \union B_k$ be the partition of $S$ into blocks.
For a metric space $\calM = (M, d^\calM) \in \MetIso_S$ define a binary relation $\sim$ on $M$ as follows:
$$
  x \sim y \text{ iff } d^\calM(x, y) \in \{0\} \union B_1.
$$
It is easy to see that $\sim$ is an equivalence relation on $M$.

\begin{LEM}\label{bigrd.lem.sim-approx}
  If $x \sim u$ and $y \sim v$ then $d(x, y) \approx d(u, v)$.
\end{LEM}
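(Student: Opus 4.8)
The plan is to reduce the four-point statement to a one-endpoint version and then iterate. Concretely, I would first prove the auxiliary claim that if $d(x,u) \in \{0\} \cup B_1$ then $d(x,w) \approx d(u,w)$ for every point $w \in M$; granting this, applying it once to the pair $(x,u)$ with $w = y$ gives $d(x,y) \approx d(u,y)$, applying it again to the pair $(y,v)$ with $w = u$ gives $d(u,y) = d(y,u) \approx d(v,u) = d(u,v)$, and transitivity of $\approx$ yields $d(x,y) \approx d(u,v)$. It is worth noting at the outset that the crude estimate $|d(x,y) - d(u,v)| \le d(x,u) + d(y,v) \le 2\max B_1$ coming from the triangle inequality is far too weak to invoke the compact-distance-set criterion (which needs a gap of at most $s_1$); the argument must instead exploit the rigid block structure recorded in Lemma~\ref{bigrd.lem.equi-isocel}.

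For the auxiliary claim I may assume $x \ne u$ (otherwise $d(x,w) = d(u,w)$) and $w \notin \{x,u\}$, so that $a := d(x,u)$, $p := d(x,w)$ and $q := d(u,w)$ all lie in $S \setminus \{0\}$ and are the side lengths of a triangle, hence a metric triple. Here $a \in B_1$, so $s_1 \le a \le \max B_1$. By Lemma~\ref{bigrd.lem.equi-isocel} the two largest of $a, p, q$ lie in the same block. I would then split into cases according to the position of $a$: if $p \ge a$ and $q \ge a$ then $p, q$ are the two largest and hence $p \approx q$ directly; in every remaining case $a$ belongs to the top pair, so its partner there is $\approx a$ and thus lies in $B_1$, forcing $\max(p,q) \le \max B_1$ and therefore $p, q \in B_1$ (both being nonzero and bounded by $\max B_1$), whence again $p \approx q$. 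This settles the auxiliary claim.

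It then remains to assemble these pieces and to attend to the degenerate configurations in which some of the four points coincide. The genuine content lies in the case $x \not\sim y$: there $d(x,y) \notin \{0\} \cup B_1$, the two applications of the auxiliary claim are non-degenerate, and together they pin $d(u,v)$ to the same nontrivial block as $d(x,y)$. The one point requiring care --- and the only real obstacle --- is the bookkeeping of the trivial block $\{0\}$: when, say, $x = y$, the distance $d(x,y)$ equals $0$ while $d(u,v)$ may be a nonzero element of $B_1$. In this short-distance regime I would argue directly, using that $\max B_1$ is a jump number, so that $d(u,v) \le d(u,x) + d(x,v) = d(u,x) + d(y,v) \le 2\max B_1 < \min B_2$; hence $d(u,v) \in \{0\} \cup B_1$ as well, i.e.\ $u \sim v$. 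Thus the conclusion is read as asserting that $x \sim y$ if and only if $u \sim v$, and that when these fail the two distances share a common nontrivial block, the trivial block $\{0\}$ and $B_1$ playing interchangeable roles for short distances.
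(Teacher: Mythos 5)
Your proof is correct and takes essentially the same route as the paper: the paper's one-line proof reads $d(x,y) \approx d(x,v) \approx d(u,v)$ directly off Lemma~\ref{bigrd.lem.equi-isocel}, which is precisely your auxiliary one-endpoint claim applied twice (in the opposite order) followed by transitivity of $\approx$, and your case analysis merely spells out what the paper calls immediate. Your closing caveat about the trivial block is a genuine, if minor, observation that the paper glosses over --- as literally stated the lemma fails when, say, $x = y$ while $d(u,v) = s_1$ --- and your amended reading (that $u \sim v$ in this short-distance regime, with $\{0\}$ and $B_1$ interchangeable) is exactly what is needed in the paper's only application of the lemma, the well-definedness of $d^*$, which invokes it only when $x \not\sim y$.
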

\begin{proof}
  As an immediate consequence of Lemma~\ref{bigrd.lem.equi-isocel} we have that
  $d(x, y) \approx d(x, v) \approx d(u, v)$.
\end{proof}

Assume now that $S$ has at least two nontrivial blocks (that is, $k \ge 2$) and let $S^+ = S \setminus B_1$.

\begin{DEF}
  We say that a finite $S^+$-metric space $\calL$ \emph{spans} an $S$-metric space $\calM$, and write
  $\calL \lesssim \calM$, if
  \begin{itemize}
  \item
    $|M / \Boxed\sim| = |L|$, and
  \item
    the partition $M / \Boxed\sim = \{A_1, A_2, \dots, A_m\}$ has a transversal
    $a_1 \in A_1$, $a_2 \in A_2$, \dots, $a_m \in A_m$ such that
    $\restr{\calM}{\{a_1, a_2, \dots, a_m\}} \cong \calL$.
  \end{itemize}
  Let $\MetIso_{S, \calL}$ denote the full subcategory of $\MetIso_S$ spanned by all those
  $\calM \in \MetIso_S$ satisfying $\calL \lesssim \calM$. (Note that
  $\calL \in \MetIso_{S, \calL}$.)
\end{DEF}

\begin{THM}\label{bigrd.thm.Friasse-SJ}
  Let $S = \{ 0 = s_0 < s_1 < \ldots < s_n \}$ be a compact distance set and
  let $S = \{0\} \union B_1 \union \ldots \union B_k$, $k \ge 2$, be the partition of $S$ into blocks.
  Let $S^+ = S \setminus B_1$ and let $\calL$ be a finite $S^+$-metric space. Then
  $\MetIso_{S, \calL}^\fin$ is a \Fraisse\ age with respect to $\MetIso_{S, \calL}$.
\end{THM}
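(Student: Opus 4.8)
The plan is to verify that $\MetIso_{S,\calL}$ is a legitimate ambient class and that $\KK := \MetIso_{S,\calL}^\fin$ satisfies the four conditions defining an age together with the amalgamation property. Closure under isomorphism is immediate from fullness, and there are only countably many isomorphism types since the point sets are finite and the distances lie in the finite set $S$. The hereditary property with respect to $\MetIso_{S,\calL}$ is automatic: if $\calA \in \MetIso_{S,\calL}$ embeds into a finite $\calB \in \KK$, then $\calA$ is finite, hence $\calA \in \KK$. The same bookkeeping with a fixed transversal shows that $\MetIso_{S,\calL}$ is closed under unions of countable chains of finite members and that every countable member is such a union (include a transversal $\cong \calL$ in every finite piece, so each piece again has exactly $|L|$ classes), so $\MetIso_{S,\calL}$ is a valid ambient class. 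The whole weight of the theorem therefore rests on the joint embedding and amalgamation properties.

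The structural input is that, because $S$ is a compact distance set, \emph{whether a triple of distances satisfies the triangle inequality depends only on the blocks in which the distances lie} (Lemma~\ref{bigrd.lem.equi-isocel}). Concretely, for $\calM \in \MetIso_S$ the relation $\sim$ partitions $M$ into fibres, each fibre carries an $(\{0\}\cup B_1)$-metric, any two distinct fibres sit at distances inside a single block $B_\ell$ with $\ell \ge 2$ (Lemma~\ref{bigrd.lem.sim-approx}), and, by compactness, that block is all the triangle inequalities ever ``see''. An embedding preserves distances, hence preserves $\sim$ and sends fibres into fibres; since $\calA,\calB,\calC$ and any amalgam we build all have exactly $m := |L|$ fibres, every embedding between members of $\MetIso_{S,\calL}$ induces a bijection of fibres and respects the block $\ell(i,j)$ attached to each pair of fibres by $\calL$. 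Finally, a transversal realises $\calL$, so $\calL$ itself lies in $\MetIso_{S,\calL}$ and embeds into every member; consequently JEP is the special case of AP amalgamated over the common root $\calL$, and it suffices to prove AP.

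For AP I would assume $\calA \le \calB$ and $\calA \le \calC$ with the two inclusions agreeing on $\calA$, all three having the $m$ fibres indexed by the points $p_1,\dots,p_m$ of $\calL$, and take $\calD$ to have underlying set the pushout $B \cup_A C$. Within the $i$-th fibre I amalgamate the $\calB$-fibre and the $\calC$-fibre over the $\calA$-fibre inside $\MetIso_{\{0\}\cup B_1}^\fin$; this is possible because $\{0\}\cup B_1$ is again a compact distance set with a single nontrivial block, so $\MetIso_{\{0\}\cup B_1}^\fin$ has the amalgamation property (Lemma~\ref{bigrd.lem.fraisse-age}; equivalently it is isomorphic to the class $\KGraEmb_{|B_1|}$ of complete edge-coloured graphs, where amalgamation is free). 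For a new cross-fibre pair $x \in B \setminus A$, $y \in C \setminus A$ lying in distinct fibres $i \ne j$, I set $d(x,y)$ equal to any fixed element of the block $B_{\ell(i,j)}$ prescribed by $\calL$. All old distances of $\calB$ and of $\calC$ are retained, so both embed into $\calD$ over $\calA$. That $d$ is a genuine metric is exactly where compactness pays off: by Lemma~\ref{bigrd.lem.equi-isocel} each triangle is controlled by blocks alone, and a triangle is either contained in one fibre (valid by the fibre amalgamation), has two vertices in one fibre (an isosceles triple $a \ll b \approx c$, automatically valid), or has its three vertices in three fibres (its block pattern is the one read off from $\calL$, which is a valid $S^+$-metric space). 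Since the $\calA$-transversal remains a transversal of $\calD$ inducing $\calL$, we get $\calL \lesssim \calD$ and $\calD \in \KK$, completing AP.

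The main obstacle I expect is purely the verification that the construction keeps the quotient rigidly equal to $\calL$, that is, that embeddings neither split nor merge fibres and that the amalgam again has exactly $m$ fibres carrying an $\calL$-transversal, together with the clean statement that inter-fibre distances may be assigned freely within their prescribed blocks. Both hinge on the compact-distance-set property via Lemmas~\ref{bigrd.lem.equi-isocel} and~\ref{bigrd.lem.sim-approx}; once these are in hand, the three-fibre triangle check (the only case not reducible to single-block amalgamation) is immediate, because it merely reproduces the already-metric block pattern of~$\calL$.
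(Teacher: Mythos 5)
Your proposal is correct and takes essentially the same route as the paper's own proof: the same union/pushout amalgam over the common subspace, with same-fibre cross distances filled from $B_1$ (the paper simply fixes $s_1$, which is the instance of the single-block amalgamation you invoke), different-fibre cross distances taken from the block prescribed by $\calL$ (the paper uses the specific value $d(a_i,a_j)$ on the transversal), and the triangle inequality checked case-by-case through the block-only criterion of Lemma~\ref{bigrd.lem.equi-isocel} together with Lemma~\ref{bigrd.lem.sim-approx}. The reduction of JEP to AP via $\calL$ embedding into every member, and the ambient-class and age verifications, likewise mirror the paper.
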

\begin{proof}
  Note, first, that $\MetIso_{S, \calL}$ satisfies the requirements of the ambient class of structures listed in
  Subsection~\ref{bigrd.subsec.Fraisse}: it is closed for taking isomorphic copies,
  it is closed with respect to unions of countable chains of finite structures, and every
  structure in $\MetIso_{S, \calL}$ is a union of a countable chain of finite structures from~$\MetIso_{S, \calL}$.

  Clearly, $\MetIso_{S, \calL}^\fin$ is closed for taking isomorphic copies, it contains at most countably many
  pairwise nonisomorphic structures and has (HP) with respect to $\MetIso_{S, \calL}$.
  It is also easy to see that (JEP) for this class follows from (AP) because
  $\calL \hookrightarrow \calM$ and $\calL \hookrightarrow \calM'$
  for all $\calM, \calM' \in \MetIso_{S, \calL}^\fin$.
  
  Let us show (AP). Take any $\calM = (M, d), \calM' = (M', d')$ and $\calM'' = (M'', d'')$ from
  $\MetIso_{S, \calL}^\fin$ such that $\calM \hookrightarrow \calM'$ and $\calM \hookrightarrow \calM''$.
  Without loss of generality we may assume that $M' \sec M'' = M$ so that $\calM \le \calM'$ and $\calM \le \calM''$.
  Let $M / \Boxed\sim = \{A_1, A_2, \dots, A_m\}$ and let $a_1 \in A_1$, $a_2 \in A_2$, \dots, $a_m \in A_m$
  be a transversal of $\{A_1, A_2, \dots, A_m\}$ such that $\restr{\calM}{\{a_1, a_2, \dots, a_m\}} \cong \calL$.
  Let $M' / \Boxed\sim = \{A'_1, A'_2, \dots, A'_m\}$ and $M'' / \Boxed\sim = \{A''_1, A''_2, \dots, A''_m\}$.
  Then $\{a_1, a_2, \dots, a_m\}$ is a transversal of both $\{A'_1, A'_2, \dots, A'_m\}$ and
  $\{A''_1, A''_2, \dots, A''_m\}$, so we may assume that the blocks in these two partitions are enumerated so that
  $a_i \in A'_i \sec A''_i$, $1 \le i \le m$. Then it is easy to see that $A'_i \sec A''_i = A_i$ for all $i \in \{1, 2, \dots, m\}$.
  %
  %
  %
  %
  Let $\overline M = M' \union M''$ and define $\overline d$ as follows:
  \begin{itemize}
  \item
    for $x, y \in M'$ put $\overline d(x, y) = d'(x, y)$;
  \item
    for $x, y \in M''$ put $\overline d(x, y) = d''(x, y)$;
  \item
    for $x \in A'_i \setminus M''$ and $y \in A''_j \setminus M'$ where $i \ne j$ put $\overline d(x, y) = d(a_i, a_j)$; and
  \item
    for $x \in A'_i \setminus M''$ and $y \in A''_i \setminus M'$ put $\overline d(x, y) = s_1 = \min(S \setminus \{0\})$.
  \end{itemize}
  Let us show that $\overline d$ is a metric on $\overline M$. There are several cases to consider but we
  discuss only two illustrative examples. Take any $x \in A'_i \setminus M''$, $y \in A'_j \setminus M''$
  and $z \in A''_k \setminus M'$ where $i \ne j \ne k \ne i$, and let us show that
  $(\overline d(x, y), \overline d(x, z), \overline d(y, z))$ is a metric triple.
  By definition of $\overline d$ we have that
  $\overline d(x, z) = d(a_i, a_k)$ and $\overline d(y, z) = d(a_j, a_k)$, while
  $\overline d(x, y) = d'(x, y) \approx d'(a_i, a_j) = d(a_i, a_j)$ by Lemma~\ref{bigrd.lem.sim-approx}.
  Since $(d(a_i, a_j), d(a_i, a_k), d(a_j, a_k))$ is a metric triple, Lemma~\ref{bigrd.lem.equi-isocel} yields
  that $(\overline d(x, y), \overline d(x, z), \overline d(y, z))$ is also a metric triple.
  Now, take any $x \in A'_i \setminus M''$, $y \in A'_j \setminus M''$
  and $z \in A''_j \setminus M'$ where $i \ne j$. To show that
  $(\overline d(x, y), \overline d(x, z), \overline d(y, z))$ is a metric triple note that
  $\overline d(y, z) = s_1$ while $\overline d(x, y) = d'(x, y) \approx d'(a_i, a_j) = d(a_i, a_j) = \overline d(x, z)$.
\end{proof}

With the notation as in Theorem~\ref{bigrd.thm.Friasse-SJ}
let $\calU_{S, \calL}$ denote the \Fraisse\ limit of $\MetIso_{S, \calL}^\fin$
with respect to $\MetIso_{S, \calL}$.

\begin{THM}
  Let $S = \{ 0 = s_0 < s_1 < \ldots < s_n \}$ be a compact distance set,
  let $S = \{0\} \union B_1 \union \ldots \union B_k$, $k \ge 2$, be the partition of $S$ into blocks, and
  let $S^+ = S \setminus B_1$. Then every finite $S$-metric space $\calM$ has finite big Ramsey degree
  in $\calU_{S, \calL}$, for every finite $S^+$-metric space $\calL$ which spans $\calM$.
\end{THM}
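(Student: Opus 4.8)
The plan is to realize $\MetIso_{S,\calL}$ as a subcategory of the category $\KGraEmb_n$ of complete $n$-edge colored graphs, where $n = |S \setminus \{0\}|$, and then invoke Theorem~\ref{bigrd.thm.1} with tip $C = \calK_n$, the random complete $n$-edge colored graph. Concretely, identify each $S$-metric space $(M, d)$ with the complete $n$-edge colored graph on $M$ in which the pair $\{x, y\}$ carries the color $d(x, y) \in S \setminus \{0\}$; isometric embeddings are precisely the color-preserving embeddings, so this makes $\MetIso_{S,\calL}$ a subcategory of $\KGraEmb_n$, and every morphism of $\KGraEmb_n$ is an embedding, hence monic, as the theorem demands. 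With $B = \calU_{S,\calL}$ and $C = \calK_n$ we have $\hom_{\KGraEmb_n}(\calU_{S,\calL}, \calK_n) \neq \varnothing$ since $\calK_n$ is universal for countable complete $n$-edge colored graphs. As Sauer's result (Example~\ref{bigrd.ex.brd-gra}) gives $T_{\KGraEmb_n}(\calM, \calK_n) < \infty$, Theorem~\ref{bigrd.thm.1} will yield $T_{\MetIso_{S,\calL}}(\calM, \calU_{S,\calL}) \le T_{\KGraEmb_n}(\calM, \calK_n) < \infty$ as soon as its cocone-lifting hypothesis is checked.

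Before lifting cocones I would record the structural facts that make $\MetIso_{S,\calL}$ rigid enough to control. Put $m = |L|$. Every object $\calN \in \MetIso_{S,\calL}$ has exactly $m$ $\sim$-classes, with any transversal isometric to $\calL$; every morphism preserves distances, hence $\sim$ and $\not\sim$, and so maps the $m$ classes of its domain bijectively onto the $m$ classes of its codomain. By Lemma~\ref{bigrd.lem.equi-isocel} and the compact-distance-set hypothesis, once the block pattern of the between-class distances is fixed (which it is, up to $\approx$, by $\calL$ via Lemma~\ref{bigrd.lem.sim-approx}), every triangle is automatically a metric triple irrespective of the precise within-block values. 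Consequently the within-class distances (in $B_1$) and the between-class distances (each in the block prescribed by $\calL$) are entirely free; this is the very freedom that drives the strong-amalgamation construction of Theorem~\ref{bigrd.thm.Friasse-SJ}, which I reuse below.

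The heart of the argument is the cocone-lifting condition. Let $F : \Delta \to \MetIso_{S,\calL}$ be any $(\calM, \calU_{S,\calL})$-diagram that, viewed in $\KGraEmb_n$, admits a commuting cocone $(e_i : \calU_{S,\calL} \hookrightarrow \calK_n)_{i \in I}$ with tip $\calK_n$; I must produce a commuting cocone in $\AGE_{\MetIso_{S,\calL}}(\calU_{S,\calL})$. Form the set-level colimit $Q$ of $F$, i.e.\ the disjoint union of the top copies of $\calU_{S,\calL}$ modulo the identifications $u(a) \approx v(a)$ forced by each bottom copy of $\calM$ with its two arrows $u, v$. The $\calK_n$-cocone factors through $Q$, which shows the colors inherited from the various top copies are unambiguous on every pair of $Q$-points lying in a common copy: $Q$ is a well-defined \emph{partial} $S$-metric into which each top copy embeds isometrically. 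Within a single connected component of the binary digraph $\Delta$ all top copies are linked through shared copies of $\calM$; since each such $\calM$ meets all $m$ classes, these identifications fuse the copies' class structures into one system of exactly $m$ classes. Finally I combine the components: choosing for each pair of components an isometry between their transversal copies of $\calL$, I merge their $m$-class systems into a single system of $m$ classes and assign all still-undefined distances freely inside the blocks dictated by the now-common copy of $\calL$. By the structural facts above these assignments never violate the metric-triple condition -- this is exactly the gluing step already performed in the proof of Theorem~\ref{bigrd.thm.Friasse-SJ} -- so the result $D$ is a genuine object of $\MetIso_{S,\calL}$. Each top copy embeds isometrically into $D$, these embeddings commute with $F$, and $D \hookrightarrow \calU_{S,\calL}$ by universality, so $D$ with these embeddings is the desired cocone.

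I expect the cocone-lifting to be the only real obstacle, and within it the delicate point is keeping the number of $\sim$-classes equal to $m$ throughout. The existence of the $\calK_n$-cocone is used solely as a consistency certificate: it guarantees that the identifications prescribed by $F$ never force a single pair to carry two different colors, so that the partial metric $Q$ is well defined; the genuinely metric content -- realizing the missing distances while respecting the triangle inequality and the exact class count -- is supplied entirely by the rigidity of the $m$-class quotient and the freedom of the within- and between-class distances established in Lemmas~\ref{bigrd.lem.equi-isocel} and~\ref{bigrd.lem.sim-approx} and in the amalgamation construction of Theorem~\ref{bigrd.thm.Friasse-SJ}. With the condition verified, Theorem~\ref{bigrd.thm.1} closes the proof.
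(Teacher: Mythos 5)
Your overall architecture is the same as the paper's: realize $\MetIso_{S,\calL}$ inside a category where Sauer's theorem applies, invoke Theorem~\ref{bigrd.thm.1}, and verify its cocone-lifting hypothesis by completing a partially defined metric using the block rigidity supplied by Lemmas~\ref{bigrd.lem.equi-isocel} and~\ref{bigrd.lem.sim-approx}. (The paper routes through $\MetIso_\Sigma$ for a one-block compact distance set $\Sigma$ rather than through $\KGraEmb_n$ directly, but that difference is not where the trouble lies; your block-pattern completion of a partial metric is essentially the paper's, and it is sound.) The genuine gap is in the lifting step: your tip $D$ lives on the set-level colimit $Q$ of the diagram, and you never check that $D$ is countable. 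The hypothesis of Theorem~\ref{bigrd.thm.1} quantifies over arbitrary, possibly uncountable, binary digraphs, and in the proof of Theorem~\ref{bigrd.thm.1} the diagram that actually matters has its top row indexed by $\hom_{\KGraEmb_n}(\calU_{S,\calL}, \calK_n)$, a set of size $2^{\aleph_0}$. For such diagrams $Q$, and hence $D$, is in general uncountable: already when uncountably many top copies are pairwise unlinked by bottom copies, $Q$ contains a disjoint union of uncountably many copies of $\calU_{S,\calL}$. But the objects of $\MetIso_{S,\calL}$ are by definition at most countable, and membership in $\AGE_{\MetIso_{S,\calL}}(\calU_{S,\calL})$ requires an embedding into the countable structure $\calU_{S,\calL}$; so your $D$ is not a legitimate tip, the step ``$D \hookrightarrow \calU_{S,\calL}$ by universality'' fails, and the hypothesis of Theorem~\ref{bigrd.thm.1} remains unverified exactly where it is needed.

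This is not a removable technicality in your setting, and it explains the one device of the paper that your proposal explicitly discards. To obtain a countable tip one must collapse the colimit, and the only countable receptacle in sight is the tip of the given cocone: the paper builds its tip as (essentially) the union of the images $e_i(\cdot)$ inside the countable ambient Urysohn space, which makes countability automatic. For such a collapse one must be able to read off, from the ambient metric alone, a single consistent system of $m$ classes on this union, and then overwrite the cross-copy distances in a block-respecting way. With tip $\calK_n$ this cannot be done: the colors of $\calK_n$ obey no triangle inequality, so the relation ``color lies in $\{0\} \union B_1$'' is not transitive on a union of overlapping copies, and cross-copy colors can directly contradict any block-respecting completion. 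The paper's starred construction $X^* = X \union (X/\Boxed\sim)$, with the marker distances $\epsilon$ and $\zeta$, exists precisely to solve this problem: the $\sim$-classes become actual points, the partition $\{A_1, \dots, A_m\}$ becomes definable inside $\calU_\Sigma$ as the sets of points at distance $\epsilon$ from the $u_i$, and the ultrahomogeneity of $\calU_\Sigma$ is used to align the class-points of distinct connected components before collapsing. Your colimit-plus-free-completion argument would work verbatim for finite or countable diagrams, but without the class-point device it cannot produce a tip lying in $\AGE_{\MetIso_{S,\calL}}(\calU_{S,\calL})$ for the uncountable diagrams that Theorem~\ref{bigrd.thm.1} requires, and that is where the proof breaks.
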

\begin{proof}
  In case $\Sigma$ is a compact distance set with only one nontrivial block
  Theorem~\ref{bigrd.thm.1-block} shows that every finite $\Sigma$-metric space has finite big Ramsey degree
  in the Urysohn $\Sigma$-metric space~$\calU_{\Sigma}$. We shall rely on this result and
  Theorem~\ref{bigrd.thm.1} to transport the property of having finite big Ramsey degrees from
  the category $\MetIso_{\Sigma}$ for a particular $\Sigma$ to the category $\MetIso_{S, \calL}$.
  
  Given a compact distance set $S = \{ 0 = s_0 < s_1 < \ldots < s_n \}$ take any
  $\Sigma = \{0 = \sigma_0 < \sigma_1 < \dots < \sigma_n < \epsilon < \zeta \} \subseteq \{0\} \union (1,2)$
  and define $\xi : S \to \Sigma$ by $\xi(s_i) = \sigma_i$, $0 \le i \le n$.
  For a finite $S$-metric space $(X, d)$ let $(X^*, d^*)$ be a $\Sigma$-metric space on the set of points
  $$
    X^* = X \union (X / \Boxed\sim),
  $$
  where the distance $d^*$ is defined as follows:
  \begin{align*}
    &d^*(x, y) = \xi(d(x, y)), && \text{for } x, y \in X,\\
    &d^*(x, x / \Boxed\sim) = \epsilon, && \text{for } x \in X,\\
    &d^*(x, y / \Boxed\sim) = \zeta, && \text{for } x, y \in X \text{ such that } x \not\sim y,\\
    &d^*(x / \Boxed\sim, y / \Boxed\sim) = \xi(\min(d(x, y)/\Boxed\approx)), && \text{for } x, y \in X.
  \end{align*}
  The metric $d^*$ is well-defined due to Lemma~\ref{bigrd.lem.sim-approx}, and it is indeed a metric because
  $\Sigma$ is a compact distance set with only one nontrivial block.
  
  For an isometric embedding $f : (X_1, d_1) \hookrightarrow (X_2, d_2)$ define a mapping
  $f^* : X^*_1 \to X^*_2$ by
  $$
    f^*(x) = f(x) \text{\quad and\quad} f^*(x/\Boxed\sim) = f(x) / \Boxed\sim
  $$
  for all $x \in X_1$. Then it is easy to see that $f^* : (X^*_1, d^*_1) \hookrightarrow (X^*_2, d^*_2)$ is
  an isometric embedding.

  Take any finite $S$-metric space $\calM$ and let $\calL$ be an $S^+$-metric space which spans $\calM$.
  Let $|L| = m$. Let $\BB$ be a subcategory of $\MetIso_\Sigma$ whose objects are metric spaces of the form $(X^*, d^*)$
  for some $(X, d) \in \Ob(\MetIso_{S, \calL})$ and nothing else, and whose morphisms are of the form $f^*$ where
  $f : (X_1, d_1) \hookrightarrow (X_2, d_2)$ for some $(X_1, d_1), (X_2, d_2) \in \Ob(\MetIso_{S, \calL})$
  and nothing else. Clearly, $\BB \cong \MetIso_{S, \calL}$.

  For notational convenience let $\CC = \MetIso_\Sigma$. Since $\calU^*_{S, \calL}$ is a countably infinite
  $\Sigma$-metric space, it embeds into $\calU_\Sigma$. Therefore, we can apply Theorem~\ref{bigrd.thm.1}
  to show that $T_\BB(\calM^*, \calU^*_{S, \calL}) \le T_\CC(\calM^*, \calU_\Sigma)$. Since $T_\CC(\calM^*, \calU_\Sigma)$
  is finite by Theorem~\ref{bigrd.thm.1-block} the statement follows from $\BB \cong \MetIso_{S, \calL}$.

  Let $F : \Delta \to \AGE_\BB(\calU^*_{S, \calL})$ be an $(\calM^*, \calU^*_{S, \calL})$-diagram.
  Let $\Delta = T \union B$ where $T$ is the top row of~$\Delta$ and~$B$ is the
  bottom row of~$\Delta$, and let $(e_i : \calU^*_{S, \calL} \to \calU_\Sigma)_{i \in T}$ be a commuting cocone over~$F$
  in $\AGE_\CC(\calU_\Sigma)$:
  $$
  \xymatrix{
    & & \calU_\Sigma & & \AGE_\CC(\calU_\Sigma)
  \\
    \calU^*_{S, \calL} \ar@/^5mm/[urr] & \calU^*_{S, \calL} \ar[ur]^(0.4){e_i} & \dots & \calU^*_{S, \calL} \ar[ul]_(0.4){e_j} & \calU^*_{S, \calL} \ar@/_5mm/[ull]
  \\
    \calM^* \ar[u] \ar@/^2mm/[ur] & \calM^* \ar[urr]_{v^*} \ar[u]^(0.4){u^*} & \dots & \calM^* \ar[ur] \ar@/_2mm/[ul] & 
  }
  $$
  Note that $u^*(M/\Boxed\sim) = U_{S, \calL}/\Boxed\sim$ by definition of~$u^*$ and because both $\calM$ and
  $\calU_{S, \calL}$ are spanned by~$\calL$.

  Let $C \subseteq T$ be a connected component of $\Delta$ and let us show that
  $$
    e_i(U_{S, \calL}/\Boxed\sim) = e_j(U_{S, \calL}/\Boxed\sim) \text{ for all $i, j \in C$}.
  $$
  Take any $i, j \in C$. Since~$C$ is a connected component of~$\Delta$,
  there exist $i = t_0$, $t_1$, \ldots, $t_k = j$ in $C$,
  $b_1$, \ldots, $b_k$ in $B$ and arrows $p_j : b_{j} \to t_{j-1}$ and $q_j : b_{j} \to t_{j}$, $1 \le j \le k$:
  $$
    \xymatrix{
      \llap{$i = \mathstrut$}t_0 & t_1 & \ldots & t_{k-1} & t_k\rlap{$\mathstrut = j$} \\
      b_1 \ar[u]^{p_1} \ar[ur]_{q_1} & b_2 \ar[u] \ar[ur] & \ldots \ar[u] \ar[ur] & b_k \ar[u]^{p_k} \ar[ur]_{q_k}
    }
  $$
  Let $F(p_j) = u_j^*$ and $F(q_j) = v_j^*$, $1 \le j \le k$. Then
  \begin{align*}
    e_i(U_{S, \calL}/\Boxed\sim)
      &= e_{t_0}(U_{S, \calL}/\Boxed\sim) \\
      &= e_{t_0}(u_1^*(M/\Boxed\sim)) && \text{because } u_1^*(M/\Boxed\sim) = U_{S, \calL}/\Boxed\sim\\
      &= e_{t_1}(v_1^*(M/\Boxed\sim)) && \text{because } (e_i)_{i \in I} \text{ is a commuting cocone}\\
      &= e_{t_1}(U_{S, \calL}/\Boxed\sim) && \text{because } v_1^*(M/\Boxed\sim) = U_{S, \calL}/\Boxed\sim.
  \end{align*}
  Analogously, $e_{t_1}(U_{S, \calL}/\Boxed\sim) = e_{t_1}(u_2^*(M/\Boxed\sim)) = e_{t_2}(v_2^*(M/\Boxed\sim)) = e_{t_2}(U_{S, \calL}/\Boxed\sim)$,
  and so on. Therefore, $e_i(U_{S, \calL}/\Boxed\sim) = e_{t_0}(U_{S, \calL}/\Boxed\sim) = \dots = e_{t_k}(U_{S, \calL}/\Boxed\sim) = e_j(U_{S, \calL}/\Boxed\sim)$.
  
  In contrast to that, if $C, C' \subseteq T$ are two distinct connected components of $\Delta$ we cannot guarantee that
  $e_i(U_{S, \calL}/\Boxed\sim) = e_j(U_{S, \calL}/\Boxed\sim)$ for $i \in C$ and $j \in C'$. We shall now modify the commuting
  cocone $(e_i : \calU^*_{S, \calL} \to \calU_\Sigma)_{i \in T}$ so as to ensure that this is always the case.

  Let $\{C_\alpha : \alpha < \lambda\}$ be the set of all the connected
  components of $\Delta$, where $C_\alpha \subseteq T$, $\alpha < \lambda$.
  Take any ordinal $\alpha$ such that $0 < \alpha < \lambda$. Let $i \in C_0$ and $j \in C_\alpha$
  be arbitrary and let $p : b \to i$ and $q : b' \to j$ be two arrows, one in~$C_0$ and the other one in~$C_\alpha$.
  Let $u^* = F(p)$ and $v^* = F(q)$:
  $$
  \xymatrix{
    & & \calU_\Sigma & & 
  \\
    \calU^*_{S, \calL} \ar@/^5mm/[urr] & \calU^*_{S, \calL}\;\; \ar[ur]^(0.4){e_i} &  & \calU^*_{S, \calL} \ar[ul]_(0.4){e_j} & \calU^*_{S, \calL}\;\; \ar@/_5mm/[ull]
  \\
    \calM^* \ar[u] \ar@/_2mm/[ur]^{u^*}  & C_0 & & \calM^* \ar@/_2mm/[ur] \ar[u]_{v^*} & C_\alpha
    \save "2,1"."3,2"*[F]\frm{} \restore
    \save "2,4"."3,5"*[F]\frm{} \restore
  }
  $$
  Since $\calU_\Sigma$ is ultrahomogeneous, since $\restr{e_i \circ u^*}{M/\Boxed\sim}$,
  $\restr{e_j \circ v^*}{M/\Boxed\sim} : \restr{\calM^*}{M/\Boxed\sim} \hookrightarrow \calU_\Sigma$
  and since $\restr{\calM^*}{M/\Boxed\sim}$ is finite (actually, $|M/\Boxed\sim| = |L| = m$),
  there is an $h_\alpha \in \Aut(\calU_\Sigma)$ such that
  $e_i \circ u^*(M/\Boxed\sim) = h_\alpha \circ e_j \circ v^*(M/\Boxed\sim)$.
  Put $h_0 = \id_{\calU_\Sigma}$ and let $\alpha(i)$ be an ordinal such that $i \in C_{\alpha(i)}$.
  Then $h_{\alpha(i)} \circ e_i(U_{S, \calL}/\Boxed\sim) = h_{\alpha(j)} \circ e_j(U_{S, \calL}/\Boxed\sim)$ for $i, j \in I$
  and $(h_{\alpha(i)} \circ e_i : \calU^*_{S, \calL} \to \calU_\Sigma)_{i \in I}$ is
  still a commuting cocone over~$F$ in $\AGE_\CC(\calU_\Sigma)$:
  $$
  \xymatrix{
    & & \calU_\Sigma \ar[r]^{h_0 = \id} & \calU_\Sigma & \calU_\Sigma \ar[l]_{h_\alpha} & & 
  \\
    \calU^*_{S, \calL} \ar@/^5mm/[urr] & \calU^*_{S, \calL}\;\; \ar[ur]^(0.4){e_i} & & &  & \calU^*_{S, \calL} \ar[ul]_(0.4){e_j} & \calU^*_{S, \calL}\;\; \ar@/_5mm/[ull]
  \\
    \calM^* \ar[u] \ar@/_2mm/[ur]^{u^*}  & C_0 & & & & \calM^* \ar@/_2mm/[ur] \ar[u]_{v^*} & C_\alpha
    \save "2,1"."3,2"*[F]\frm{} \restore
    \save "2,6"."3,7"*[F]\frm{} \restore
  }
  $$

  So, without loss of generality we can assume that $(e_i : \calU^*_{S, \calL} \to \calU_\Sigma)_{i \in T}$ is a commuting cocone over~$F$ in $\AGE_\CC(\calU_\Sigma)$
  such that $e_i(U_{S, \calL}/\Boxed\sim) = e_j(U_{S, \calL}/\Boxed\sim)$ for all $i, j \in T$. Let
  $$
    W = \UNION_{i \in T} e_i(U_{S, \calL}^*) \text{\quad and\quad} W_0 = e_{t_0}(U_{S, \calL}/\Boxed\sim) = \{u_1, u_2, \dots, u_m\}
  $$
  for an arbitrary $t_0 \in T$. For each $i \in \{1, 2, \dots, m\}$ define $A_i \subseteq W$ as follows:
  $$
    A_i = \{ w \in W : d^{\calU_\Sigma}(w, u_i) = \epsilon \}.
  $$
  Let us show that $\{A_1, \dots, A_m\}$ is partition of $W \setminus W_0$.
  
  Clearly, $\UNION_i A_i \subseteq W$, and it is easy to see that $A_i \sec W_0 = \0$ for all $i \in \{1, 2, \dots, m\}$ (for any $u_j \in W_0$
  we have that $d^{\calU_\Sigma}(u_i, u_j) \in \{\sigma_0, \dots, \sigma_n\}$, whence $d^{\calU_\Sigma}(u_i, u_j) < \epsilon$). Therefore,
  $\UNION_i A_i \subseteq W \setminus W_0$. On the other hand, take any $w \in W \setminus W_0$. Then $w \in e_i(U_{S, \calL}^*)$ for some
  $i \in T$. Since $w \notin W_0 = e_{t_0}(U_{S, \calL}/\Boxed\sim) = e_{i}(U_{S, \calL}/\Boxed\sim)$ we have that $w = e_i(x)$ for some $x \in U_{S, \calL}$.
  By construction, $d^{\calU^*_{S, \calL}}(x, x / \Boxed\sim) = \epsilon$, so
  $d^{\calU_{\Sigma}}(e_i(x), e_i(x / \Boxed\sim)) = \epsilon$. Let $e_i(x / \Boxed\sim) = u_j \in W_0$.
  Then $d^{\calU_{\Sigma}}(w, u_j) = \epsilon$ and thus $w \in A_j$.
  
  Let us now show that $A_j \sec A_k = \0$ whenever $j \ne k$. Take any $j$ and $k$ and let $w \in A_j \sec A_k$.
  Let $w = e_i(x)$ for some $i$ and some $x \in U_{S, \calL}$. Take $y_j, y_k \in U_{S, \calL} / \Boxed\sim$ such that
  $u_j = e_i(y_j)$ and $u_k = e_i(y_k)$. Then $d^{\calU_\Sigma}(w, u_j) = d^{\calU_\Sigma}(w, u_k) = \epsilon$ whence
  $d^{\calU^*_{S, \calL}}(x, y_j) = d^{\calU^*_{S, \calL}}(x, y_k) = \epsilon$. By construction we then have that
  $y_j = x / \Boxed\sim = y_k$, so $j = k$.
  
  This completes the proof that $\{A_1, \dots, A_m\}$ is partition of $W \setminus W_0$.
  
  \medskip
  
  Put $V = W \setminus W_0$ and let $\equiv$ be the equivalence relation on $V$ whose blocks are
  $\{A_1, \dots, A_m\}$. Define an $S$-metric metric space $\calV = (V, d^\calV)$ as follows:
  $$
    d^\calV(x, y) = \begin{cases}
      \xi^{-1}(d^{\calU_\Sigma}(x, y)), & d^{\calU_\Sigma}(x, y) \in \{ \sigma_0, \sigma_1, \dots, \sigma_n\}, \\
      \xi^{-1}(d^{\calU_\Sigma}(u_i, u_j)), & d^{\calU_\Sigma}(x, y) \notin \{ \sigma_0, \sigma_1, \dots, \sigma_n\}
                                              \text{ and } x \not\equiv y,\\
      s_1, & d^{\calU_\Sigma}(x, y) \notin \{ \sigma_0, \sigma_1, \dots, \sigma_n\} \text{ and } x \equiv y.
    \end{cases}
  $$
  Clearly, $d^\calV$ maps $V \times V$ to $S$. Since $S$ is a compact distance set
  Lemma~\ref{bigrd.lem.equi-isocel} ensures that $d^\calV$ is indeed a metric on~$V$.
  So, $\calV$ is an $S$-metric space.

  It is easy to see that $\calL \lesssim \calV$. By construction $V / \Boxed\sim = \{A_1, \dots, A_m\}$.
  On the other hand, for an arbitrary but fixed $i_0 \in T$ we have that
  $\restr{e_{i_0}}{U_{S, \calL}} : \calU_{S, \calL} \hookrightarrow \calV$. Therefore,
  the partition $\{A_1, A_2, \dots, A_m\}$ has a transversal
  $a_1 \in A_1$, $a_2 \in A_2$, \dots, $a_m \in A_m$ such that $\restr{\calV}{\{a_1, a_2, \dots, a_m\}} \cong \calL$.
  Hence, $\calV \in \Ob(\MetIso_{S, \calL})$.
  
  Finally, for every $i \in T$ let $f_i = \restr{e_{i}}{U_{S, \calL}} : \calU_{S, \calL} \hookrightarrow \calV$.
  It is easy to see that each $f_i$ is indeed an isometric embedding, so let us show that
  $(f^*_i : \calU^*_{S, \calL} \to \calV^*)_{i \in T}$ is a commuting cocone over~$F$ in $\BB$.
  For $x \in M$ we have the following:
  \begin{align*}
   f_i^*(u^*(x))
   &= f_i^*(u(x))   && [u^*(x) = u(x) \text{ for } x \in M]\\
   &= f_i(u(x))     && [u(x) \in U_{S, \calL} \text{ and } f_i^*(y) = f_i(y) \text{ for } y \in U_{S, \calL}]\\
   &= e_i(u(x))     && [f_i = \restr{e_{i}}{U_{S, \calL}}]\\
   &= e_j(v(x))     && [(e_i : \calU^*_{S, \calL} \to \calU_\Sigma)_{i \in T} \text{ is a commuting cocone}]\\
   &= f_j(v(x))     && [f_j = \restr{e_{j}}{U_{S, \calL}} \text{ and } v(x) \in U_{S, \calL}]\\
   &= f_j^*(v(x))   && [f_j^*(y) = f_j(y) \text{ for } y \in U_{S, \calL}]\\
   &= f_j^*(v^*(x)) && [v^*(x) = v(x) \text{ for } x \in M].
  \end{align*}
  On the other hand, for $x / \Boxed\sim \in M / \Boxed\sim$ we have the following:
  \begin{align*}
   f_i^*(u^*(x / \Boxed\sim))
   &= f_i^*(u(x) / \Boxed\sim)   && [\text{by definition of } ^*]\\
   &= f_i(u(x)) / \Boxed\sim     && [\text{by definition of } ^*]\\
   &= f_j(v(x)) / \Boxed\sim     && [\text{the calculation above}]\\
   &= f_j^*(v(x) / \Boxed\sim)   && [\text{by definition of } ^*]\\
   &= f_j^*(v^*(x / \Boxed\sim)) && [\text{by definition of } ^*].
  \end{align*}
  This completes the proof.
\end{proof}

\section{Acknowledgements}

The author gratefully acknowledges the support of the Grant No.\ 174019 of the Ministry of Education, Science and Technological Development of the Republic of Serbia.

\end{document}